\newtheorem{theorem}{Theorem}[section]
\newtheorem{lemma}[theorem]{Lemma}
\newtheorem{corollary}[theorem]{Corollary}
\newtheorem{proposition}[theorem]{Proposition}
\newdefinition{example}{Example}
\newcommand{\ba}{\backslash}
\newcommand{\btu}{\bigtriangleup}
\newproof{proof}{Proof}
\newproof{prooft}{Proof of Theorem~2.4}
\begin{document}

\title{Inductive tools for connected delta-matroids and multimatroids\footnote{\copyright 2017
This manuscript version is made available under the CC-BY-NC-ND 4.0 license \texttt{http://creativecommons.org/licenses/by-nc-nd/4.0/}}\footnote{\texttt{http://dx.doi.org/10.1016/j.ejc.2017.02.005}}}

\author[a1]{Carolyn Chun}
\ead{chun@usna.edu}
\author[a2]{Deborah Chun}
\ead{deborah.chun@mail.wvu.edu}
\author[a3]{Steven D. Noble\corref{cor1}}
\ead{s.noble@bbk.ac.uk}
\cortext[cor1]{Corresponding author}
\address[a1]{Mathematics Department, United States Naval Academy, Chauvenet Hall, 572C Holloway Road, Annapolis, Maryland 21402-5002, United States of America}
\address[a2]{Department of Mathematics, West Virginia University Institute of Technology, Montgomery, West Virginia, United States of America}
\address[a3]{Department of Economic, Mathematics and Statistics, Birkbeck, University of London, Malet St, London, WC1E 7HX United Kingdom}

\begin{abstract}
We prove a splitter theorem for tight multimatroids, generalizing the corresponding result for matroids, obtained independently by Brylawski and Seymour. Further corollaries give splitter theorems for delta-matroids and ribbon graphs.

\end{abstract}

\begin{keyword}matroid \sep delta-matroid \sep partial dual \sep minors \sep inductive tool \sep chain theorem \sep splitter theorem \sep $2$-connected \sep connected delta-matroid
\MSC[2014]{05B35}
\end{keyword}
\date{\today}

\maketitle

\section{Introduction}
A \emph{matroid} $M=(E,\mathcal{B})$ is a finite \emph{ground set} $E$ together with a non-empty collection of subsets of the ground set, $\mathcal{B}$, that are called \emph{bases}, satisfying the following conditions, which are stated in a slightly different way from what is most common in order to emphasize the connection with other combinatorial structures discussed in this paper.
\begin{enumerate}
\item If $B_1$ and $B_2$ are bases and $x\in B_1 \btu B_2$, then there exists $y \in B_1\btu B_2$ such that $B_1 \btu\{x,y\}$ is a basis.
\item All bases are equicardinal.
\end{enumerate}
Matroid theory is often thought of as a generalization of graph theory, as a matroid $(M,\mathcal B)$ may be constructed from a graph $G$ by taking $E$ to be set of edges of $G$ and $\mathcal B$ to be the edge sets of maximal spanning forests of $G$.
Graph theory and matroid theory are mutually enriching: many results in graph theory have been generalized to matroids, and results in matroid theory have sometimes been proved before the corresponding specialization in graph theory. In~\cite{delta-ribbon}, Chun, Moffatt, Noble and Rueckriemen showed that the mutually-enriching relationship between graphs and matroids is analogous to the mutually-enriching relationship between cellularly-embedded graphs, which we view as ribbon graphs, and objects called \emph{delta-matroids}. They gave further evidence for this by establishing several new results for delta-matroids in~\cite{delta-twist}, each of which was inspired by a previously known result concerning ribbon graphs.

Delta-matroids were extensively studied by Bouchet in the 1980s, but until recently had been little studied since that foundational work. In addition to~\cite{delta-twist,delta-ribbon}, where the authors were led to delta-matroids by studying ribbon graphs, they have been studied extensively by Brijder and Hoogeboom who were originally interested in the principal pivot transform in binary matrices (see, for example,~\cite{BH11,BH13,BH14}).

A \emph{delta-matroid} $D=(E,\mathcal{F})$ is a finite \emph{ground set} $E$ together with a non-empty collection of subsets of the ground set, $\mathcal{F}$, that are called \emph{feasible sets}, satisfying the following condition known as the \emph{symmetric exchange axiom}.
If $F_1$ and $F_2$ are feasible sets and $x\in F_1 \btu F_2$, then there exists $y \in F_1\btu F_2$ such that $F_1 \btu\{x,y\}$ is a feasible set. Note that we allow $y=x$.
It follows immediately from the definitions that every matroid is a delta-matroid.
In fact, the axiom for the feasible sets of a delta-matroid corresponds exactly to (1) in the axioms we gave earlier for the bases of a matroid. A delta-matroid is said to be \emph{even} if the sizes of its feasible sets all have the same parity. Thus a matroid is an even delta-matroid.

As in many other areas of mathematics, structural results on matroids often require an assumption of some level of connectivity of the matroid.
In~\cite{geelen}, Geelen defined connectivity for delta-matroids as follows.
Given delta-matroids $D_1=(E_1,\mathcal F_1)$ and $D_2=(E_2, \mathcal F_2)$ with disjoint ground sets, their \emph{direct sum}, written $D_1 \oplus D_2$, is the delta-matroid with ground set $E_1\cup E_2$ and collection of feasible sets $\{F_1 \cup F_2: F_1\in \mathcal F_1 \text{ and } F_2\in\mathcal F_2\}$. If $D=D_1\oplus D_2$ then we say that $E(D_1)$ and $E(D_2)$ are \emph{separators} of $D$. If $X$ is a separator of a delta-matroid $D$ and $\emptyset \ne X \ne E(D)$ then we say that $X$ is a \emph{proper separator} of $D$.
A delta-matroid $D$ is \emph{disconnected} if it has a proper separator. Otherwise $D$ is \emph{connected}.
Clearly the matroids that satisfy the definition of delta-matroid connectivity are exactly those that satisfy the well-known definition of matroid connectivity~\cite{Oxley11}.
Moreover when applied to matroids, the definition of a separator in a delta-matroid is exactly the same as that of a separator in a matroid~\cite{Oxley11}.
Our aim is to study the effect on connectivity of removing elements from a delta-matroid. As a consequence we provide useful tools for inductive proofs of results concerning $2$-connected ribbon graphs, which we define later.

Deletion and contraction are the two natural ways in which to remove an element from a matroid or delta-matroid.
For a delta-matroid $D=(E,\mathcal{F})$, and $e\in E$, if $e$ is in every feasible set of $D$, then we say that $e$ is a \emph{coloop of $D$}.
If $e$ is in no feasible set of $D$, then we say that $e$ is a \emph{loop of $D$}.
If $e$ is not a coloop, then, following Bouchet and Duchamp~\cite{BD91}, we define $D$ \emph{delete} $e$, written $D\ba e$, to be
\[D\ba e=(E-e, \{F : F\in \mathcal{F}\text{ and } F\subseteq E-e\}).\]
If $e$ is not a loop, then we define $D$ \emph{contract} $e$, written $D/e$, to be \[D/e=(E-e, \{F-e : F\in \mathcal{F}\text{ and } e\in F\}).\]
If $e$ is a loop or coloop, then $D/e=D\ba e$.

Both $D\ba e$ and $D/e$ are delta-matroids (see \cite{BD91}).
Let $D'$ be a delta-matroid obtained from $D$ by a sequence of deletions and contractions.  Then $D'$ is independent of the order of the  deletions and contractions used in its construction (see \cite{BD91}) and $D'$ is called a \emph{minor} of $D$.
We let $D|A$ denote $D\ba (E-A)$. All of these definitions are entirely consistent with the corresponding better-known definitions for matroids.

Two early results describing the effect of deleting or contracting an element from a matroid are the following. The first was proved by Tutte~\cite{Tutte66} and the second independently by Brylawski~\cite{Bry72} and Seymour~\cite{Seymour77}.
\begin{theorem}\label{th:tutte}
Let $e$ be an element of a connected matroid $M$. Then either $M\ba e$ or $M/e$ is connected.
\end{theorem}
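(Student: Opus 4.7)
The plan is to prove the contrapositive: assume both $M\ba e$ and $M/e$ are disconnected, and derive that $M$ itself is disconnected. I would use the standard rank-function characterization of matroid connectivity (which, as the paper observes, coincides with its separator-based definition on matroids): $M$ is connected if and only if $r(A)+r(E-A)>r(E)$ for every nonempty proper subset $A$ of $E$. First I would dispose of the trivial case in which $e$ is a loop or coloop: connectivity then forces $|E|=1$, so $M\ba e=M/e$ is the empty matroid, which is connected. So assume $r(e)=1$ and $r(E-e)=r(E)$.

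From $M\ba e$ disconnected, obtain a partition $E-e=X\cup Y$ with $X,Y$ nonempty and $r(X)+r(Y)=r(E)$. Applying the connectivity of $M$ to $X\cup\{e\}$ forces $r(X\cup e)+r(Y)>r(E)$, whence $r(X\cup e)=r(X)+1$, i.e.\ $e\notin\mathrm{cl}(X)$; symmetrically $e\notin\mathrm{cl}(Y)$ and $r(Y\cup e)=r(Y)+1$. From $M/e$ disconnected, obtain a partition $E-e=X'\cup Y'$ with $X',Y'$ nonempty and, using $r_{M/e}(S)=r(S\cup e)-1$, with $r(X'\cup e)+r(Y'\cup e)=r(E)+1$. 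Applying connectivity of $M$ to $X'$ forces $r(X')+r(Y'\cup e)>r(E)$, which combined with the previous identity yields $r(X'\cup e)=r(X')$, i.e.\ $e\in\mathrm{cl}(X')$; symmetrically $e\in\mathrm{cl}(Y')$ and $r(X')+r(Y')=r(E)+1$.

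Set $A=X\cap X'$, $B=X\cap Y'$, $C=Y\cap X'$, $D=Y\cap Y'$. All four are nonempty: for example, $A=\emptyset$ forces $X'\subseteq Y$, hence $e\in\mathrm{cl}(X')\subseteq\mathrm{cl}(Y)$, contradicting $e\notin\mathrm{cl}(Y)$; the other three emptiness cases are handled symmetrically, each ruled out by one of the four closure conditions just established. Next, apply submodularity to $(X\cup e,\,X'\cup e)$, using $(X\cup X')\cup\{e\}=E-D$, $(X\cap X')\cup\{e\}=A\cup\{e\}$, and $r(A\cup e)=r(A)+1$ (as $A\subseteq X$ and $e\notin\mathrm{cl}(X)$). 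After the obvious cancellation this gives
\[
r(A)+r(E-D)\leq r(X)+r(X').
\]
A symmetric application to $(Y\cup e,\,Y'\cup e)$ gives $r(D)+r(E-A)\leq r(Y)+r(Y')$. Adding these inequalities and substituting $r(X)+r(Y)=r(E)$ and $r(X')+r(Y')=r(E)+1$ produces
\[
\bigl[r(A)+r(E-A)\bigr]+\bigl[r(D)+r(E-D)\bigr]\leq 2r(E)+1,
\]
contradicting the connectivity of $M$, which forces each bracket to be at least $r(E)+1$.

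The main obstacle is the delicate interplay of closures that forces all four intersections $A$, $B$, $C$, $D$ to be nonempty: one must first nail down the asymmetric identities $e\notin\mathrm{cl}(X),\,\mathrm{cl}(Y)$ versus $e\in\mathrm{cl}(X'),\,\mathrm{cl}(Y')$, and then use them to rule out each of the four degenerate alignments between the two separations. Once this structural step is in place, the remainder is just two applications of the submodular inequality followed by the connectivity bound applied to the nonempty proper subsets $A$ and $D$.
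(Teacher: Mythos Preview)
Your proof is correct. The contrapositive setup, the four closure facts $e\notin\mathrm{cl}(X),\mathrm{cl}(Y)$ and $e\in\mathrm{cl}(X'),\mathrm{cl}(Y')$, the nonemptiness of $A,B,C,D$, and the two submodularity inequalities all check out, and the final contradiction $2r(E)+2\le 2r(E)+1$ is clean.

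However, your route is entirely different from the paper's. The paper does not give a direct rank-function argument for Theorem~\ref{th:tutte} at all: it recovers the result as a corollary of the multimatroid machinery. Specifically, a matroid is an even delta-matroid, and every even delta-matroid $D$ corresponds to a tight $2$-matroid $Q_2(D)$ under which deletion and contraction become the two minors $Q_2(D)|e'$ and $Q_2(D)|e$. Bouchet's chain theorem (Theorem~\ref{chainmm}) then says that for a skew class of size $k$ in a connected tight multimatroid, at least $k-1$ of the associated one-element minors are connected; with $k=2$ this yields Corollary~\ref{chain}, and specializing to matroids gives Theorem~\ref{th:tutte}.

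What each approach buys: yours is self-contained and elementary, needing only submodularity and the rank characterization of connectivity, and it illuminates exactly how the two separations of $M\ba e$ and $M/e$ must ``cross'' in a way incompatible with the connectivity of $M$. The paper's route, by contrast, is not meant as an efficient proof of Tutte's theorem but as evidence that the multimatroid framework genuinely subsumes the classical matroid results: the point is that Theorems~\ref{th:tutte} and~\ref{th:bry} drop out for free once Theorems~\ref{chainmm} and~\ref{splitmm} are in hand.
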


\begin{theorem}\label{th:bry}
Let $N$ be a connected minor of a connected matroid $M$ and let $e$ be an element of $E(M)-E(N)$. Then either $M/e$ or $M\ba e$ is connected and has $N$ as a minor.
\end{theorem}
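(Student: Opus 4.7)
The plan is to fix any representation $N=M/C \ba D$ with $C, D$ disjoint and $C \cup D = E(M) - E(N)$, observe that $e$ lies in exactly one of $C$ or $D$, and then proceed by a case analysis driven by Theorem~\ref{th:tutte}. If $e\in D$ and $M\ba e$ is connected the conclusion is immediate, because $N$ is already a minor of the connected matroid $M\ba e$; and symmetrically if $e\in C$ and $M/e$ is connected. This leaves two remaining configurations: case~(i), $e\in D$ with $M\ba e$ disconnected (so $M/e$ is connected by Theorem~\ref{th:tutte}); and case~(ii), $e\in C$ with $M/e$ disconnected (so $M\ba e$ is connected).

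I would reduce case~(ii) to case~(i) by matroid duality, using the identity $(M/C\ba D)^{*}=M^{*}\ba C/D$: under this identity $e$ lies in the deleted set of the representation $N^{*}=M^{*}\ba C/D$, and since connectedness is dual-invariant and $(M/e)^{*}=M^{*}\ba e$, the hypotheses of case~(ii) for $(M,N)$ translate into the hypotheses of case~(i) for $(M^{*},N^{*})$. Applying case~(i) to $(M^{*},N^{*})$ exhibits $N^{*}$ as a minor of $M^{*}/e$, and dualising recovers $N$ as a minor of $M\ba e$.

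For case~(i) I would let $(X,Y)$ be a proper separation of $M\ba e$. Since $e\notin X\cup Y$ we have $M\ba e = M|X \oplus M|Y$, and the minor $(M\ba e)/C\ba(D-e)$ correspondingly splits as $(M|X)/C_X\ba D_X \,\oplus\, (M|Y)/C_Y\ba D_Y'$, where $C_X=C\cap X$, $D_X=D\cap X$, $C_Y=C\cap Y$, $D_Y'=(D-e)\cap Y$. Connectedness of $N$ forces one summand to have empty ground set, so we may assume $E(N)\subseteq X$; then $Y=C_Y\cup D_Y'$ and $N=(M|X)/C_X\ba D_X$. A short rank computation then delivers the key identification $(M/e)|X=M|X$: since $M$ is connected with $|E(M)|\geq 2$, $e$ is neither a loop nor a coloop of $M$, hence $r(M\ba e)=r(M)$ and the separation yields $r_M(X)+r_M(Y)=r(M)$; combined with the inequality $r_M(X\cup\{e\})+r_M(Y)>r(M)$ coming from the connectedness of $M$, this forces $r_M(X\cup\{e\})>r_M(X)$, i.e.\ $e\notin\mathrm{cl}_M(X)$. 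Monotonicity of closure then gives $e\notin\mathrm{cl}_M(A)$ for every $A\subseteq X$, so $r_{(M/e)|X}(A)=r_M(A\cup\{e\})-1=r_M(A)=r_{M|X}(A)$ and $(M/e)|X=M|X$.

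Substituting back, $N=((M/e)|X)/C_X\ba D_X=(M/e)/C_X\ba(D_X\cup Y)$, which exhibits $N$ as a minor of the connected matroid $M/e$ and completes case~(i). The main obstacle I anticipate is precisely the identification $(M/e)|X=M|X$: the remainder of the argument is routine bookkeeping with the separation and direct-sum decomposition of the minor, but without this identification one cannot transfer the minor representation of $N$ from the ``deletion side'' $M\ba e$ to the ``contraction side'' $M/e$.
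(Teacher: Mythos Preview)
Your argument is correct. The separation-and-closure computation in case~(i) is sound: from $r_M(X)+r_M(Y)=r(M)$ and the connectedness of $M$ applied to the partition $(X\cup\{e\},Y)$ you correctly extract $e\notin\mathrm{cl}_M(X)$, which is exactly what is needed to identify $(M/e)|X$ with $M|X$. The duality reduction of case~(ii) to case~(i) is standard and valid.

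However, your route is genuinely different from the paper's. The paper does not prove Theorem~\ref{th:bry} directly at all: it first establishes the multimatroid splitter theorem (Theorem~\ref{splitmm}) for connected tight multimatroids, then specialises via the $2$-matroid/delta-matroid correspondence to obtain Corollary~\ref{splitter} for connected even delta-matroids, and finally observes that every matroid is an even delta-matroid, so Theorem~\ref{th:bry} drops out as a special case. The paper's proof of Theorem~\ref{splitmm} does share structural DNA with your case~(i) --- both locate the connected minor inside one block of a separation of the ``bad'' single-element minor and then transfer it across --- but the tools are different: where you use matroid duality and the closure operator, the paper uses fundamental circuits of tight multimatroids (Lemma~\ref{Xcct}), the behaviour of singular skew classes (Lemma~\ref{sing}), and Bouchet's chain theorem (Theorem~\ref{chainmm}).

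What each buys: your proof is elementary and self-contained within matroid theory, needing only Theorem~\ref{th:tutte} and basic rank/closure facts, and it exploits duality to halve the work --- a symmetry not transparently available in the multimatroid setting. The paper's approach costs more machinery but yields Theorem~\ref{th:bry} simultaneously with its delta-matroid and ribbon-graph analogues, which is the whole point of the paper.
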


Results of the first type are known as chain theorems; results of the second type are known as splitter theorems. Our original aim was to prove a splitter theorem for connected even delta-matroids, but it turns out that the natural setting for these results is an even more general object, namely multimatroids, which we discuss in the next section. Working in this more general setting requires no extra effort and indeed allows 
us to make use of previous work of Bouchet establishing a chain theorem for connected multimatroids~\cite[Theorem 8.7]{mmiii}. As we shall see later, Bouchet noted that this result implied a chain theorem for even delta-matroids.

The structure of this paper is as follows. In the next section we describe multimatroids and prove our main result; in the final section we describe the implications of this result to delta-matroids and ribbon graphs.

\section{Multimatroids and the main result}
We begin by defining a multimatroid and associated terminology. All definitions follow Bouchet~\cite{mmi,mmii,mmiii}.
Let $U$ be a finite set and $\Omega$ a partition of $U$, where each set of the partition is called a \emph{skew class}.
Every pair of elements contained in a skew class is a \emph{skew pair}.
A set $T\subseteq U$ is a \emph{transversal} of $\Omega$ if it meets each skew class in exactly one element, and a set is a \emph{subtransversal} of $\Omega$ if it is contained in a transversal of $\Omega$.
Let $\mathcal{S}(\Omega)$ be the set of subtransversals of $\Omega$.
The triple $Q=(U,\Omega ,r)$ is a \emph{multimatroid}, where $r:\mathcal{S}(\Omega)\rightarrow \mathbb{Z}^+$ is its \emph{rank function}, if $r$ obeys the following axioms:
\begin{enumerate}
\item $r(\emptyset )=0$;
\item $r(A)\leq r(A\cup x)\leq r(A)+1$, if $A\in\mathcal{S}(\Omega)$ and $x$ is an element in a skew class that avoids $A$;
\item $r(A)+r(B)\geq r(A\cup B)+r(A\cap B)$, if $A\cup B$ is in $\mathcal{S}(\Omega)$; and
\item $r(A\cup x)-r(A)+r(A\cup y)-r(A)\geq 1$, if $A\in\mathcal{S}(\Omega)$ and $\{x,y\}$ is a skew pair in a skew class that avoids $A$.
\end{enumerate}
A multimatroid whose skew classes each have size $q$ is called a \emph{$q$-matroid}. It follows immediately from the definition that $(U,\Omega,r)$ is a $1$-matroid if and only if it is a matroid with ground set $U$ and rank function $r$. We will see in the next section that there is a correspondence between $2$-matroids and delta-matroids.

A subtransversal is an \emph{independent set} if its rank is equal to its cardinality, otherwise it is \emph{dependent}.
The maximal independent sets are the \emph{bases} of a multimatroid.
If no skew class consists of a single element, then the multimatroid is \emph{non-degenerate}, and Bouchet~\cite[Proposition~5.5]{mmi} showed that the bases of a non-degenerate multimatroid are transversal.
A subtransversal is a \emph{circuit} if it is dependent but every proper subset is independent.

Let $Q=(U,\Omega ,r)$ be a multimatroid and take $A\in\mathcal{S}(\Omega)$.
Let $\Omega '=\{\omega \in\Omega :\omega \cap A=\emptyset\}$, let $U'\subseteq U$ be the set of elements in the skew classes of $\Omega '$ and let $r':\mathcal{S}(\Omega ')\rightarrow \mathbb{Z}^+$ be defined by
\begin{equation} r'(X)=r(X\cup A)-r(A).\label{eq:minorrank}\end{equation}
Then it is straightforward to verify that $(U',\Omega ',r')$ is a multimatroid which
we call the \emph{minor of $Q$ with respect to $A$} and which we write as $Q|A$.
More generally, we say that $(U',\Omega ',r')$ is a \emph{minor} of $Q$.
It follows immediately from \eqref{eq:minorrank} that if $A$ and $B$ are disjoint and such that $A \cup B \in \mathcal{S}(\Omega)$, then $(Q|A)|B = Q|A\cup B = (Q|B)|A$.

An element in a multimatroid is \emph{singular} if it has rank zero.
A skew class is \emph{singular} if it contains a singular element.
The following lemma of Bouchet~\cite[Proposition~5.5]{mmii} is needed.
\begin{lemma}
\label{sing}
Let $\omega$ be a skew class of a multimatroid $Q$.
If $\omega$ is singular, then, for every pair of elements $\{e,f\}\subseteq \omega$, the minors $Q|e$ and $Q|f$ are equal.
\end{lemma}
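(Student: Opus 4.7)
The goal is to prove that the two minors $Q|e$ and $Q|f$ coincide, and since by construction they share the same ground set $U'$ and the same partition $\Omega '=\Omega\setminus\{\omega\}$, the only content is to verify that their rank functions agree on every subtransversal $X\in\mathcal{S}(\Omega ')$. Unravelling the definition~\eqref{eq:minorrank}, this amounts to showing
\[
r(X\cup e)-r(e) \;=\; r(X\cup f)-r(f)
\]
for every $X\in\mathcal{S}(\Omega ')$ and every pair $e,f\in\omega$.

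The plan is to pin down a distinguished singular element $s\in\omega$ and to compute both sides relative to it. First I would observe that axiom~(4), applied with $A=\emptyset$ to any skew pair in $\omega$, forces $r(x)+r(y)\ge 1$, so $\omega$ contains at most one singular element; let $s$ denote the unique element with $r(s)=0$. For any $X\in\mathcal{S}(\Omega ')$, submodularity (axiom~(3)) applied to the disjoint subtransversals $\{s\}$ and $X$ gives $r(s)+r(X)\ge r(X\cup s)+r(\emptyset)$, i.e.\ $r(X\cup s)\le r(X)$, while monotonicity (axiom~(2)) yields the reverse inequality. Hence $r(X\cup s)=r(X)$, and so $r(X\cup s)-r(s)=r(X)$.

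Next, for any other $e\in\omega$ with $e\ne s$, I would apply axiom~(4) to the skew pair $\{s,e\}$ at the subtransversal $X$, obtaining
\[
\bigl(r(X\cup s)-r(X)\bigr)+\bigl(r(X\cup e)-r(X)\bigr)\ge 1.
\]
Because the first bracket vanishes by the previous paragraph, axiom~(2) forces $r(X\cup e)-r(X)=1$. The same argument at $X=\emptyset$ gives $r(e)=1$. Therefore $r(X\cup e)-r(e)=r(X)$, matching the value obtained for $s$. Thus $r_{Q|e}(X)=r(X)=r_{Q|s}(X)$ for every $X\in\mathcal{S}(\Omega ')$, so $Q|e=Q|s$. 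For arbitrary $e,f\in\omega$, if either is $s$ we are done; otherwise both equal $Q|s$ by the argument above, and transitivity finishes the proof.

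The only place where care is needed is that submodularity requires the union to be a subtransversal, which is why I pass through the singular element $s$ rather than trying to compare $Q|e$ and $Q|f$ directly: the set $\{e,f\}$ is not a subtransversal, so axiom~(3) cannot be applied to $\{e\}$ and $X\cup f$. Using $s$ as an intermediary sidesteps this obstacle entirely, and the remaining verifications are immediate consequences of axioms~(2) and~(4).
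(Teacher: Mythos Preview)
Your proof is correct. The paper itself does not supply a proof of this lemma; it simply cites it as Proposition~5.5 of Bouchet~\cite{mmii}. Your argument is the natural direct one: using the unique singular element $s\in\omega$ as a pivot, you show via axioms~(2), (3), and~(4) that $r_{Q|e}(X)=r(X)$ for every $e\in\omega$ and every subtransversal $X$ of $\Omega\setminus\{\omega\}$, whence all the minors $Q|e$ coincide. The closing observation---that one cannot compare $Q|e$ and $Q|f$ head-on because $\{e,f\}$ fails to be a subtransversal, so submodularity is unavailable---is exactly the right diagnosis of why the intermediary $s$ is needed.
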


The following is a slight generalization of a theorem of Bouchet~\cite[Theorem~5.6]{mmii} and is similar to the Scum Theorem in matroid theory. The proof is a straightforward extension of Bouchet's but is included for completeness.
\begin{theorem}
\label{scum}
For a non-degenerate multimatroid $Q=(U,\Omega ,r)$, $A\in\mathcal{S}(\Omega)$ and element $e$ of $A$ satisfying $r(e)=1$, there is an independent set $I$ of $Q$ such that $e\in I$ and $Q|A=Q|I$.
\end{theorem}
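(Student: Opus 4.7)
The plan is to induct on $|A|$. The base case $|A|=1$ forces $A=\{e\}$, which is independent by hypothesis, so $I=A$ works. For the inductive step I will peel off an element $f\in A\setminus\{e\}$, apply the inductive hypothesis to $A'=A\setminus\{f\}$ to obtain an independent set $I'\ni e$ with $Q|A'=Q|I'$, and then promote $I'$ by adding one extra element from the skew class $\omega$ containing $f$.

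Because $A$ is a subtransversal, $\omega$ misses $A'$; since $Q|A'=Q|I'$ forces $I'$ to meet the same skew classes as $A'$, $\omega$ also misses $I'$. Hence $I'\cup\{f\}$ is a subtransversal and
\[
Q|A \;=\; (Q|A')|f \;=\; (Q|I')|f \;=\; Q|(I'\cup\{f\}),
\]
using both the inductive hypothesis and the compositional identity $(Q|X)|Y=Q|(X\cup Y)$ noted after equation (\ref{eq:minorrank}). If $I'\cup\{f\}$ is independent we are done; otherwise $f$ is a loop in the minor $Q|I'$, so the skew class $\omega$ is singular in $Q|I'$.

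The key observation to close the dependent case is that $Q|I'$ is itself non-degenerate (minors preserve skew-class sizes), so $\omega$ has some element $g\ne f$. Applying axiom~(4) to the skew pair $\{f,g\}$ inside the minor $Q|I'$ gives $r_{Q|I'}(g)+r_{Q|I'}(f)\ge 1$; since $f$ is a loop, $g$ has rank $1$ in $Q|I'$, so $I'\cup\{g\}$ is independent in $Q$. Lemma~\ref{sing} applied to the singular skew class $\omega$ of $Q|I'$ yields $(Q|I')|f=(Q|I')|g$, which combined with the display above gives $Q|A=Q|(I'\cup\{g\})$. Setting $I=I'\cup\{g\}$ completes the induction, since $e\in I'\subseteq I$.

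I expect the main obstacle to be the bookkeeping around non-degeneracy and the minor identities: one must verify that $Q|I'$ inherits non-degeneracy (to guarantee a second element in $\omega$), that $\omega$ survives as a skew class in $Q|I'$, and that Lemma~\ref{sing} can be invoked in the minor rather than in $Q$ itself. Once these are in place, axiom~(4) inside $Q|I'$ does all of the work, and the induction is driven by the pre-existing compositional identity for minors together with the fact that $r(e)=1$ is automatically preserved when one shrinks $A$.
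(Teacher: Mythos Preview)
Your proof is correct and follows the same inductive skeleton as the paper's: peel off one element of $A$ other than $e$, invoke the inductive hypothesis, and use Lemma~\ref{sing} together with axiom~(4) to swap in a non-singular element of the relevant skew class when the obvious candidate fails to keep the set independent.

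The one genuine (if small) difference is the order in which you take the minor and apply induction. The paper first passes to the minor $Q'=Q|x$ and applies the inductive hypothesis there to $A-x$; you instead apply the inductive hypothesis in $Q$ itself to $A'=A\setminus\{f\}$, and only afterwards adjoin an element of the skew class of $f$. Your ordering has the advantage you noted in your final paragraph: the hypothesis $r(e)=1$ is trivially available for the recursive call because you never leave $Q$, whereas the paper's ordering tacitly needs $r_{Q|x}(e)=1$ in the minor. The verifications you flagged (that $Q|I'$ is non-degenerate, that $\omega$ survives as a skew class of $Q|I'$, and that Lemma~\ref{sing} applies inside the minor) are all routine and go through exactly as you describe.
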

\begin{proof}
We proceed by induction on $|A|$. If $|A|=1$, then the result is clear. Otherwise choose an element $x$ other than $e$ in $A$. Let $Q'=Q|x$ and $A'=A-x$. By induction there is an independent set $I'$ of $Q'$ such that $e\in I'$ and $Q'|I'=Q'|A'$. If $I' \cup x$ is an independent set of $Q$ then the proof is complete. So we may assume that $I' \cup x$ is dependent in $Q$ and thus $r(I' \cup x) \leq |I'|$. Since $I'$ is independent in $Q'$, we have $|I'| = r(I' \cup x)-r(x)$. Consequently $r(x)=0$ and so the skew class containing $x$ is singular in $Q$. Choose another element $y$ from this skew class. Then by Axiom~(4) in the definition of a multimatroid, $r(y)=1$, and by Lemma~\ref{sing}, $Q|x=Q|y$. Now choose $I=I' \cup y$. We have $Q|A=Q|x|A'=Q'|A'=Q'|I'=Q|y|I'=Q|I$ and $r(I)=r(I' \cup y) = r'(I')+r(y)=|I'|+1=|I|$, where $r'$ denotes the rank function of $Q'$. Hence the result follows by induction.\qed
\end{proof}

A set $X\subseteq U$ is a \emph{separator} of $Q$ if $X$ is a union of skew classes of $\Omega$ such that, for all $A\in\mathcal{S}(\Omega )$,
\[r(A)=r(A\cap X)+r(A-X).\]
We say that a separator $X$ is \emph{proper} if $X$ is non-empty and $X\ne U$.
A multimatroid $Q$ is \emph{disconnected} if it has a proper separator. Otherwise $Q$ is \emph{connected}.
Notice that separators of a $1$-matroid are precisely the separators of the corresponding matroid and that a $1$-matroid is connected if and only if the corresponding matroid is connected.

We will restrict our attention to tight multimatroids. We shall see later that tight $2$-matroids correspond to the class of even delta-matroids and that tight $3$-matroids correspond to the class of vf-safe delta-matroids, which we define later.
Let $Q=(U,\Omega ,r)$ be a multimatroid.
We say that a subtransversal is a \emph{near-transversal} if it meets all of the skew classes except for one.
Then $Q$ is \emph{tight} if it is non-degenerate and for every skew class $\omega$ and every near-transversal $A$ that avoids $\omega$,
\[\sum _{x\in\omega} (r(A\cup x)-r(A))= |\omega | -1.\]
By Axiom~(4) for the multimatroid rank function, the left-hand side is bounded below by the right-hand side for all multimatroids, but we insist on equality in the case of a tight multimatroid.
Bouchet~\cite[Proposition~4.1]{mmiii} showed that every minor of a tight multimatroid is tight.
The main result in~\cite{mmiii} is the following chain theorem by Bouchet.
\begin{theorem}
\label{chainmm}
Let $\{e_1,e_2,\dots ,e_k\}$ be a skew class of a connected tight multimatroid $Q$.
At least $k-1$ of the minors in $\{Q|e_1,Q|e_2,\dots ,Q|e_k\}$ are connected.
\end{theorem}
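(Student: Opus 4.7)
The plan is to proceed by contradiction. Suppose that two of the minors, say $Q|e_1$ and $Q|e_2$, are disconnected, with proper separators $X_1$ and $X_2$ respectively; each $X_i$ is a union of skew classes from $\Omega\setminus\{\omega\}$. The goal is to produce a proper separator of $Q$, contradicting the hypothesis. Unpacking the separator condition for $X_i$ in $Q|e_i$ via \eqref{eq:minorrank}, it becomes
\[
r(B) + r(e_i) = r\bigl((B\cap X_i)\cup\{e_i\}\bigr) + r\bigl(B\setminus X_i\bigr)
\]
for every subtransversal $B$ of $Q$ that contains $e_i$.

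I would first dispose of the singular case: if $\omega$ is singular then by Lemma~\ref{sing} all minors $Q|e_\ell$ coincide, so $X_1$ is simultaneously a separator of every $Q|e_\ell$, and the rank identity (with $r(e_\ell)=0$) immediately yields that $X_1$ is a separator of $Q$. Having excluded this, assume $r(e_\ell)=1$ for every $\ell$. Next, I would replace $X_1$ and $X_2$ with a common separator by a standard lattice argument: submodularity (Axiom~(3)) applied inside a single minor $Q|e_1$ implies that unions and intersections of separators are separators there, and working in $Q|e_1$ one can promote $X_1\cap X_2$ or $X_1\cup X_2$ to a proper separator $X$ of both $Q|e_1$ and $Q|e_2$. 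The decisive step is then to verify that $X\cup\omega$ is a separator of $Q$. For a subtransversal $B$ meeting $\omega$ at $e_i$ this follows directly from the identity above applied in $Q|e_i$. For a subtransversal $B$ avoiding $\omega$, one sums the identity over all $\ell$ and uses the tight condition
\[
\sum_{\ell=1}^{k}\bigl(r(B\cup\{e_\ell\})-r(B)\bigr)=k-1
\]
to cancel the $k$ copies of $r(e_\ell)=1$ on the left against the $k$ terms on the right, arriving at $r(B)=r(B\cap X)+r(B\setminus X)$.

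The main obstacle will be this last sub-case, since the tight axiom is stated only for near-transversals, not for arbitrary subtransversals $B$ avoiding $\omega$. To handle this I would extend $B$ to a near-transversal $B'$ of $Q$ avoiding $\omega$, pass to the minor $Q|(B'\setminus B)$, which remains tight by~\cite[Proposition~4.1]{mmiii} and in which $B$ becomes empty while $\omega$ remains a full skew class, and apply the tight axiom there before unwinding the minor rank formula via \eqref{eq:minorrank}. The bookkeeping required to line up the separator data of $Q|e_i$ with that of the iterated minor is what makes this step technical, and it is precisely here that the tightness hypothesis is indispensable, reflecting the fact that the chain theorem fails for general multimatroids.
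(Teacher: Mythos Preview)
The paper does not contain its own proof of this statement: Theorem~\ref{chainmm} is quoted as Bouchet's result \cite[Theorem~8.7]{mmiii}, and the surrounding machinery (fundamental graphs, Theorem~\ref{th:fundgr}) is developed in order to prove the splitter theorem, not the chain theorem. So there is no in-paper proof to compare your attempt against; I will assess your argument on its own merits and indicate briefly how Bouchet's actual proof differs.

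Your argument has a genuine gap at the ``standard lattice argument''. You take $X_1$ to be a proper separator of $Q|e_1$ and $X_2$ a proper separator of $Q|e_2$, and then assert that by submodularity inside $Q|e_1$ one can pass to some $X\in\{X_1\cap X_2,\,X_1\cup X_2\}$ that is a proper separator of \emph{both} $Q|e_1$ and $Q|e_2$. But the lattice statement you invoke says only that separators of a \emph{single} multimatroid are closed under union and intersection. Here $X_1$ and $X_2$ live in two different minors, and there is no reason whatsoever for $X_2$ to be a separator of $Q|e_1$, so nothing forces $X_1\cap X_2$ or $X_1\cup X_2$ to separate either minor. This is precisely the hard part of the theorem: relating separators across the minors $Q|e_i$ for different $i$.

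Even granting a common separator $X$ of $Q|e_1$ and $Q|e_2$, the remainder of the argument would still fail. To verify that $X\cup\omega$ is a separator of $Q$ you must handle every subtransversal $B$ of $Q$. If $B$ meets $\omega$ in some $e_j$ with $j\notin\{1,2\}$, you have no separator identity available in $Q|e_j$. Likewise, your summation argument for $B$ avoiding $\omega$ writes ``one sums the identity over all $\ell$'', but the identity you display is only established for $\ell\in\{1,2\}$. Summing two instances of it and invoking tightness, which involves all $k$ elements of $\omega$, does not produce the desired cancellation.

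Bouchet's proof in \cite{mmiii} avoids this difficulty entirely by working with fundamental graphs rather than rank identities: he fixes a basis $B$ of $Q$, shows connectivity of $Q$ is equivalent to connectivity of its fundamental graph (Theorem~\ref{th:fundgr}(ii)), and then analyses how the fundamental graph changes under passing to the minors $Q|e_i$ via a pivoting operation on $B$. This converts the problem into a graph-theoretic one about cut vertices, where the interaction between the different $e_i$ becomes tractable.
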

Bouchet~\cite{mmiii} provided an example, which is attributed to an unpublished manuscript of Gasse, showing that the tightness condition is necessary.

The following splitter theorem is our main result.
\begin{theorem}
\label{splitmm}
Let $Q$ be a connected tight multimatroid and let $A$ be a non-empty subtransversal such that $Q|A$ is connected.
If $e\in A$, then
\begin{enumerate}[label=(\roman*)]
\item $Q|e$ is connected; or
\item
for all $x$ such that $\{e,x\}$ is a skew pair, $Q|x$ is connected with $Q|A$ as a minor.
\end{enumerate}
\end{theorem}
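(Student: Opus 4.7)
The plan is to induct on $|A|$. In the base case $|A|=1$, conclusion (i) holds trivially, since $Q|A=Q|e$ is connected by hypothesis. For the inductive step with $|A|\geq 2$, I would assume (i) fails---that is, $Q|e$ is disconnected---and derive (ii). Letting $\omega$ denote the skew class of $e$, I first rule out the possibility that $\omega$ is singular: were it singular, Lemma~\ref{sing} would make every $Q|y$ for $y\in\omega$ coincide, and Theorem~\ref{chainmm} (which guarantees at least $|\omega|-1$ connected minors among them) would then render them all connected, contradicting our assumption on $Q|e$. Hence $r(e)=1$, and Theorem~\ref{chainmm} yields that $Q|x$ is connected for every $x$ skew-paired with $e$, establishing the connectedness half of (ii).

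It remains to show that $Q|A$ is a minor of $Q|x$. I would pick $f\in A-e$ and split on whether $Q|f$ is connected. When $Q|f$ is connected, the plan is to apply the inductive hypothesis to $(Q|f,A-f,e)$: here $Q|f$ is connected and tight (tightness descending to minors by \cite[Proposition~4.1]{mmiii}), $(Q|f)|(A-f)=Q|A$ is connected, and $|A-f|<|A|$. The hypothesis yields either $(Q|f)|e=Q|(e\cup f)$ is connected, or $Q|(x\cup f)=(Q|x)|f$ is connected with $Q|A$ as a minor for every $x$ skew-paired with $e$. In the second subcase, $(Q|x)|f$ is a minor of $Q|x$, so $Q|A$ is transitively a minor of $Q|x$, as required. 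For the first subcase, and for the complementary case where $Q|f$ is disconnected for every $f\in A-e$, I would invoke Theorem~\ref{scum}: since $r(e)=1$, it yields an independent set $I\subseteq U$ with $e\in I$ and $Q|I=Q|A$. I would then aim to prove that $I-e+x$ is independent and that $Q|(I-e+x)=Q|I=Q|A$, from which $Q|A=(Q|x)|(I-e)$ is a minor of $Q|x$.

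The hard part will be verifying the rank-function identity $Q|(I-e+x)=Q|A$. Tightness gives $\sum_{y\in\omega}(r(S\cup y)-r(S))=|\omega|-1$ for every near-transversal $S$ avoiding $\omega$, pinning down how rank differences are distributed across $\omega$. On the other hand, a minimal proper separator $Y$ of the disconnected $Q|e$ forces $A-e$ (by the connectedness of $Q|A$) to lie entirely in one side of $Y$ and to form a transversal of the other side, so that $Q|A$ is identified with the restriction of $Q|e$ to $Y$. A careful interplay between tightness, submodularity (to pass from near-transversals to smaller subtransversals), and this separator structure should show that the rank computations defining the minor $Q|A$ are insensitive to the swap $e\leftrightarrow x$, closing the argument.
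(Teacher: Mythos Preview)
Your proposal has a genuine gap: the central claim that $Q|(I-e+x)=Q|I$ for every $x$ skew-paired with $e$ is false. Take the tight $2$-matroid $Q=Q_2(U_{1,3})$ with skew classes $\{a,a'\},\{b,b'\},\{c,c'\}$ and bases $\{a,b',c'\},\{a',b,c'\},\{a',b',c\}$. Set $A=I=\{a,b'\}$ and $e=a$. Then $Q$ is connected, $A$ is independent, $Q|A$ is connected (one skew class), and $Q|a$ is disconnected (it is $Q_2$ of the rank--zero matroid on $\{b,c\}$). But $Q|\{a,b'\}$ has $r(c')=1$, $r(c)=0$, whereas $Q|\{a',b'\}$ has $r(c)=1$, $r(c')=0$; they differ. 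The tightness and separator heuristics you sketch cannot rescue this, because tightness constrains rank increments only across a whole skew class at a \emph{near}-transversal, and does not force the ``dead'' element of $\omega$ to be the same one for every subtransversal extending $I-e$.

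The structural reason the swap fails is that, since $I$ is independent, $e$ is \emph{not} singular in $Q|(I-e)$, so Lemma~\ref{sing} is unavailable. The paper's proof fixes exactly this: instead of an independent set through $e$, it produces a \emph{circuit} $C$ with $e\in C$ and $C-e$ lying inside a separator of $Q|e$ that avoids the ground set of $Q|A$ (this is Lemma~\ref{Xcct}, obtained via the fundamental graph). Because $C$ is a circuit, $e$ \emph{is} singular in $Q|(C-e)$, and now Lemma~\ref{sing} yields $Q|C=(Q|x)|(C-e)$ for every $x$ in the skew class of $e$. One then shows, using Lemma~\ref{lem:ref}, that $Q|A$ is a minor of $Q|C$, and Theorem~\ref{chainmm} supplies the connectedness of $Q|x$. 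Your inductive scaffolding is not the issue; the missing idea is that the object enabling the swap $e\leftrightarrow x$ must make $e$ singular after removing the rest, and a circuit---not an independent set---is what achieves that.
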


The remainder of this section is devoted to proving this result. A key notion in the proof is that of a fundamental circuit which generalizes the notion of a fundamental circuit of a matroid. Let $B$ be a basis and $\omega$ be a skew class of a non-degenerate multimatroid $Q$. Then it follows immediately from the definition of a multimatroid that $B\cup \omega$ contains at most one circuit. Furthermore, if $Q$ is tight, then $B\cup \omega$ contains precisely one circuit. Following Bouchet~\cite{mmiii}, this circuit is called the \emph{fundamental circuit} of $Q$ with respect to $B$ and $\omega$, and is denoted by $C(B, \omega)$. Define a relation $\sim_B$ on the elements of $B$, by $e\sim_B f$ if $e$ belongs to the fundamental circuit of $Q$ with respect to $B$ and the skew class containing $f$.
Bouchet~\cite[Proposition 6.1]{mmiii} showed that $\sim_B$ is symmetric. The graph of $\sim_B$ is called the \emph{fundamental graph} of $B$. The following theorem, combining a special case of Proposition~7.3 and Theorem~8.3 from~\cite{mmiii}, describes the properties of fundamental graphs that we will need.

\begin{theorem}\label{th:fundgr}
Let $Q$ be a tight multimatroid, $B$ a basis of $Q$ and $G$ the fundamental graph of $B$. Then the following hold.
\begin{enumerate}[label=(\roman*)]
\item If $e\in B$ then $B-e$ is a basis of $Q|e$ and its fundamental graph is obtained from $G$ by deleting $e$ and all of its incident edges.
\item The fundamental graph $G$ is connected if and only if $Q$ is connected. Moreover $X$ is a separator of $Q$ if and only if $X$ is formed by choosing a (possibly empty) collection of connected components of $G$ and taking the union of all the skew classes corresponding to elements of $B$ belonging to these connected components.
\end{enumerate}
\end{theorem}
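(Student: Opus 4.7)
The approach is to induct on $|A|$ after reducing to a convenient setup, using Theorem~\ref{chainmm} for the connectedness of $Q|x$ in conclusion~(ii) and a fundamental-graph analysis for the minor-containment.

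First I handle the degeneracies. If the skew class $\omega$ of $e$ contains a singular element, Lemma~\ref{sing} gives $Q|z = Q|e$ for every $z \in \omega$; since tightness forces $|\omega|\ge 2$, Theorem~\ref{chainmm} then forces all these minors to be connected, so~(i) holds. I may therefore assume $r(e)=1$, and by Theorem~\ref{scum} I may replace $A$ by an independent subtransversal containing $e$ with the same minor. Thereafter $A$ is independent, and (by iterating axioms~(2) and~(4) of the multimatroid rank) it extends to a basis $B$ of $Q$. Let $G$ be the fundamental graph of $B$. By Theorem~\ref{th:fundgr}, $G$ is connected, $G\setminus A$ is the connected fundamental graph of $Q|A$, and $Q|e$ is connected if and only if $G\setminus e$ is.

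If $G\setminus e$ is connected, conclusion~(i) holds. Otherwise Theorem~\ref{chainmm} applied to $\omega$ immediately yields that $Q|x$ is connected for every $x\in\omega\setminus\{e\}$, and the task reduces to showing $Q|A$ is a minor of each such $Q|x$. I induct on $|A|$; the base case $|A|=1$ is itself conclusion~(i). For $|A|\ge 2$, assuming there exists $f\in A\setminus\{e\}$ with $Q|f$ connected (I address the contrary below), I apply the inductive hypothesis to $(Q|f,\, A\setminus f,\, e)$. If the output is conclusion~(ii) there, then $Q|A$ is a minor of $(Q|f)|x = (Q|x)|f$ and hence of $Q|x$. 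If the output is conclusion~(i) --- that is, $Q|\{e,f\}$ is connected --- then for $|A|\ge 3$ a second application of the inductive hypothesis to $(Q,\{e,f\},e)$ rules out $Q|e$ being connected and so forces $Q|\{e,f\}$ to be a minor of $Q|x$; since $Q|A$ is a minor of $Q|\{e,f\}$, it follows that $Q|A$ is a minor of $Q|x$.

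The case $|A|=2$, in which the second application of the inductive hypothesis above is impossible, must be handled directly. Here the decomposition of $G\setminus e$ forces $f$ to be a pendant vertex of $G$ adjacent only to $e$, and the tight condition then forces the fundamental circuit $C(B,\omega_f)$ to be a two-element circuit $\{e,f'\}$ for some $f'\in\omega_f\setminus\{f\}$. Direct rank computations give $r_{Q|e}(f')=0$ and $r_{Q|f'}(e)=0$, so $\omega_f$ is singular in $Q|e$ and $\omega$ is singular in $Q|f'$. Two applications of Lemma~\ref{sing} then yield $Q|A = Q|\{e,f\} = Q|\{e,f'\} = Q|\{x,f'\} = (Q|x)|f'$ for every $x\in\omega\setminus\{e\}$, exhibiting the required minor.

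The main obstacle is producing the element $f$ needed to start the induction --- one that is not a cut vertex of $G$. If every $f\in A\setminus\{e\}$ is a cut vertex, I would use Theorem~\ref{chainmm} within each $\omega_f$ to find a partner $f'\in\omega_f\setminus\{f\}$ with $Q|f'$ connected, and then argue via a singularity argument in an appropriate sub-minor (modeled on the two-element-circuit argument above, and using the components of $G\setminus e$ contained in $A\setminus\{e\}$) that $A$ may be replaced by $A' = (A\setminus f)\cup f'$ with $Q|A' = Q|A$, so that the induction proceeds with $A'$ in place of $A$. Making this replacement precise is the technical heart of the proof.
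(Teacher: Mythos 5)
What you have written is not a proof of the stated theorem. The statement to be proved is Theorem~\ref{th:fundgr}, the structural result about fundamental graphs: that for $e\in B$ the set $B-e$ is a basis of $Q|e$ whose fundamental graph is $G$ with $e$ deleted, and that the separators of $Q$ are exactly the unions of skew classes over components of $G$. Your argument is instead a proof attempt for the splitter theorem, Theorem~\ref{splitmm}, and it repeatedly invokes Theorem~\ref{th:fundgr} itself as a known tool (``by Theorem~\ref{th:fundgr}, $G$ is connected, $G\setminus A$ is the fundamental graph of $Q|A$, and $Q|e$ is connected if and only if $G\setminus e$ is''). Read against the assigned statement it is therefore circular and contains none of the required content: no verification that $B-e$ is a basis of the minor, no identification of the fundamental circuits of $Q|e$ with the fundamental circuits of $Q$ that avoid $e$, and no argument that a union of skew classes over components of $G$ satisfies $r(A)=r(A\cap X)+r(A-X)$, nor the converse. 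Note also that the paper itself does not prove this theorem; it is imported from Bouchet's Multimatroids~III (a special case of Proposition~7.3 and Theorem~8.3 there), so a self-contained proof would have to reconstruct those arguments from the rank axioms and tightness.

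Even judged as a blind proof of Theorem~\ref{splitmm}, your argument is incomplete at exactly the point you flag: when every $f\in A\setminus e$ is a cut vertex of $G$ you propose replacing $f$ by a partner $f'\in\omega_f$ and asserting $Q|A'=Q|A$ for $A'=(A\setminus f)\cup f'$. But $A$ is independent, so $f$ is not singular in $Q|(A\setminus f)$, and nothing in your set-up forces any element of $\omega_f$ to be singular there; Lemma~\ref{sing} therefore does not yield the claimed equality, and no other justification is offered. In fact this case can be avoided rather than repaired: if $Q|e$ is disconnected while $G\setminus A$ is connected (or empty), all but at most one component of $G\setminus e$ lies inside $A\setminus e$, and such a component $C$ always contains a vertex that is not a cut vertex of $G$, because the rest of $G$ meets $C\cup\{e\}$ only through $e$; that vertex is the $f$ with $Q|f$ connected that your induction needs. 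By contrast, the paper's proof of the splitter theorem is not an induction at all: assuming $Q|e$ disconnected with separator $X$ avoiding the ground set of $Q|A$, it produces a circuit $C$ with $e\in C\subseteq X\cup e$ (Lemma~\ref{Xcct}), uses Lemma~\ref{lem:ref} to see that $Q|A$ is a minor of $Q|C$, and then exploits the singularity of $e$ in $Q|(C-e)$ together with Lemma~\ref{sing} and Theorem~\ref{chainmm} to conclude that every $Q|x$ with $x\ne e$ in the skew class of $e$ is connected with $Q|A$ as a minor; your $|A|=2$ analysis is essentially a special case of this mechanism, which the paper runs once, in full generality, without the problematic replacement step.
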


We also need the following lemma due to Bouchet~\cite[Lemma 8.5]{mmii}.
\begin{lemma}
\label{rank1}
If a multimatroid $(U,\Omega ,r)$ is connected and has more than one skew class, then $r(e)=1$ for all $e\in U$.
\end{lemma}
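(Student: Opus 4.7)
The plan is to prove the contrapositive: if some element $e \in U$ satisfies $r(e)=0$, then $Q$ is disconnected. Specifically, I will show that the skew class $\omega$ containing $e$ is a proper separator; since $|\Omega|\ge 2$ forces $\omega\ne U$ and $\omega\ne\emptyset$ by definition, this contradicts the connectedness of $Q$.

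First I would set up the local structure around $e$. By Axiom~(2), $r(e) \in \{0,1\}$ for every element, and since $r(e)=0$ the skew class $\omega$ is singular. Applying Axiom~(4) with $A=\emptyset$ to the skew pair $\{e,f\}$ for any other $f\in\omega$ yields $0 + r(f) \ge 1$, and combining with Axiom~(2) gives $r(f)=1$. Thus $e$ is the unique singular element of $\omega$, and Lemma~\ref{sing} applies: $Q|e = Q|f$ for every $f\in\omega$.

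Next I would verify the separator identity $r(A) = r(A\cap\omega) + r(A-\omega)$ for each subtransversal $A\in\mathcal S(\Omega)$. Since $|A\cap\omega|\le 1$, there are three cases. If $A\cap\omega = \emptyset$, the identity is immediate. If $A\cap\omega = \{e\}$, write $A = X\cup e$ with $X = A-\omega$; submodularity (Axiom~(3)) applied to $X$ and $\{e\}$ gives $r(X) \ge r(X\cup e)$, which combined with Axiom~(2) forces $r(A) = r(X\cup e) = r(X)$, matching $r(e)+r(X)=0+r(X)$. If $A\cap\omega=\{f\}$ with $f\ne e$, then again write $A = X\cup f$; by Lemma~\ref{sing} the rank functions of $Q|e$ and $Q|f$ agree at $X$, so
\[ r(X\cup e) - 0 = r(X\cup f) - 1, \]
giving $r(X\cup e) = r(A)-1$. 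Submodularity plus Axiom~(2) applied as in the previous case force $r(X\cup e) = r(X)$, hence $r(A) = r(X) + 1 = r(A-\omega) + r(f)$, as required.

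The main obstacle is the third case, where $A$ meets $\omega$ in a rank-one element different from the singular $e$: here one cannot directly use submodularity on $A$ and $\{f\}$ to get the needed inequality, and the trick is to use Lemma~\ref{sing} to transfer the computation to the singular element $e$, whose rank zero lets submodularity do the work. Once this separator identity is established in all cases, $\omega$ is a proper separator of $Q$, contradicting connectedness and completing the proof.
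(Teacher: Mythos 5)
Your argument is correct. Note, however, that the paper itself gives no proof of this lemma: it is quoted from Bouchet (Multimatroids~II, Lemma~8.5), so your proposal supplies a self-contained proof where the paper only cites. The structure is sound: $r(e)\in\{0,1\}$ by Axioms~(1) and~(2), so the contrapositive reduces to showing that a skew class $\omega$ containing a rank-zero element $e$ satisfies $r(A)=r(A\cap\omega)+r(A-\omega)$ for every subtransversal $A$, and your three cases cover all possibilities since $|A\cap\omega|\le 1$; the resulting $\omega$ is a union of skew classes, non-empty, and proper because $|\Omega|\ge 2$. One small remark: in the third case you do not actually need Lemma~\ref{sing}. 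Applying Axiom~(4) to $X=A-\omega$ and the skew pair $\{e,f\}$ gives $\bigl(r(X\cup e)-r(X)\bigr)+\bigl(r(X\cup f)-r(X)\bigr)\ge 1$, and since submodularity with $r(e)=0$ already forces $r(X\cup e)=r(X)$, this yields $r(X\cup f)=r(X)+1$ directly; your route through the equality of the minors $Q|e$ and $Q|f$ is valid (and the hypothesis of Lemma~\ref{sing} is met, the lemma being stated before this one, so there is no circularity), but it invokes a heavier tool than the axioms require. Either way the separator identity holds and the proof is complete.
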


Combining the previous results enables us to find a circuit with particularly useful properties.

\begin{lemma}
\label{Xcct}
Let $Q$ be a connected tight multimatroid containing an element $e$ such that $Q|e$ is disconnected. If $X$ is a proper separator of $Q|e$ then $Q$ has a circuit $C$ such that $e \in C \subseteq X \cup e$.
\end{lemma}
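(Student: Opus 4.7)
The plan is to locate the desired circuit inside the fundamental graph of a carefully chosen basis. First observe that since $Q|e$ is disconnected, its ground set has at least two skew classes, so $Q$ has more than one; Lemma~\ref{rank1} then gives $r(e)=1$. Hence $\{e\}$ extends to a basis $B'$ of $Q$, and $B:=B'-e$ is a basis of $Q|e$ because $r_{Q|e}(B)=r_Q(B')-r_Q(e)=|B|$ and $B$ is a transversal of the skew classes of $Q|e$. Let $G$ be the fundamental graph of $B'$ in $Q$ and $G'$ the fundamental graph of $B$ in $Q|e$; Theorem~\ref{th:fundgr}(i) gives $G'=G-e$, and Theorem~\ref{th:fundgr}(ii) tells us that $G$ is connected (as $Q$ is) while $G'$ is disconnected (as $Q|e$ is), so $e$ is a cut-vertex of $G$ with at least one neighbor in every component of $G'$.

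By Theorem~\ref{th:fundgr}(ii), $X$ corresponds to a non-empty proper collection $\mathcal C$ of components of $G'$. Choose $f\in N_G(e)$ lying in some component of $\mathcal C$; such an $f$ exists because $e$ has a neighbor in every component of $G'$, and then $f\in X\cap B$. Let $\omega_f$ be the skew class of $f$ and take as the candidate circuit $C:=C(B',\omega_f)$, which exists and is unique because $Q$ is tight. Since $B'$ is independent, $C$ is not contained in $B'$, so $C$ meets $\omega_f-f$ in exactly one element $g$, and the remaining elements $C\cap(B'-f)$ equal $N_G(f)$ by the definition of the fundamental graph.

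To finish I need $C\subseteq X\cup\{e\}$ and $e\in C$. For the first inclusion, $g\in\omega_f\subseteq X$ and every element of $N_G(f)$ is either $e$ itself or a neighbor of $f$ in $G'$, hence lies in the same component of $G'$ as $f$ and so in $X\cap B$; therefore $C\subseteq X\cup\{e\}$. For the second, the edge $\{e,f\}$ of $G$ gives $e\in N_G(f)\subseteq C$. The one subtle point is the identification $C\cap(B'-f)=N_G(f)$, which rests on the defining equivalence of $\sim_{B'}$ and on $C$ being a subtransversal in $B'\cup\omega_f$ that avoids $f$; once this is set up, the separator structure of $G'$ delivers the remaining containment for free.
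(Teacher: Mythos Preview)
Your proof is correct and follows essentially the same approach as the paper: extend $\{e\}$ to a basis, use Theorem~\ref{th:fundgr} to see that $e$ is a cut-vertex of the fundamental graph, pick a neighbour $f$ of $e$ lying in a component contained in $X$, and take the fundamental circuit with respect to the skew class of $f$. Your write-up is a bit more explicit than the paper's (you spell out the element $g\in\omega_f-f$ and the identification $C\cap B'=N_G(f)$), but the structure and the key idea are identical.
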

\begin{proof}
Lemma~\ref{rank1} implies that $r(e)=1$, hence $e$ is contained in a basis $B$ of $Q$.
Theorem~\ref{th:fundgr} implies that the fundamental graph $G$ of $B$ is connected and that deleting $e$ from $G$ gives a disconnected graph.
So $G-e$ is disconnected but each connected component of $G-e$ has at least one vertex that is adjacent to $e$ in $G$.
Let $X$ be a proper separator of $Q|e$. Then $X$ is the union of all the skew classes corresponding to elements of $B-e$ belonging to at least one but not all of the connected components of $G-e$.
There is an element $f \in B \cap X$ such that $f$ is adjacent to $e$ in $G$. Let $C$ be the fundamental circuit of $Q$ with respect to $B$ and the skew class containing $f$.
Then $C$ is a circuit of $Q$. It contains $e$ by the definition of the edges of the fundamental graph. Moreover, this circuit does not contain any element of $B-e-X$, again by the definition of the edges of the fundamental graph and the connectivity properties of $G$ and $G-e$. Thus $e\in C \subseteq X \cup e$ and the lemma holds. \qed
\end{proof}

The proof of the following lemma requires applying the definition of a separator and the rank function of a minor with some straightforward manipulation and is omitted.
\begin{lemma}
\label{minorsep}
Let $X$ be a separator in a multimatroid $Q$ and let $A$ be a subtransversal of $Q$.
Let $U_A$ be the union of the skew classes of $Q$ that meet $A$.
Then $X-U_A$ is a separator in $Q|A$.
\end{lemma}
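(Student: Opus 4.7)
The plan is to verify directly, from the definitions, that $X - U_A$ satisfies both conditions required to be a separator of $Q|A$, namely (a) that it is a union of skew classes of $\Omega'=\{\omega\in\Omega:\omega\cap A=\emptyset\}$, and (b) the rank decomposition $r'(T)=r'(T\cap(X-U_A))+r'(T-(X-U_A))$ for every $T\in\mathcal{S}(\Omega')$, where $r'$ denotes the rank function of $Q|A$.

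For (a), I would observe that $X$ is a union of skew classes of $\Omega$ by assumption, and $U_A$ is by definition also a union of skew classes of $\Omega$. Hence $X-U_A$ is a union of skew classes; moreover each such skew class avoids $A$, so it belongs to $\Omega'$. This part is immediate.

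For (b), the key preliminary observation is that since $T\in\mathcal{S}(\Omega')$, we have $T\cap U_A=\emptyset$, so $T\cap(X-U_A)=T\cap X$ and $T-(X-U_A)=T-X$. Translating through \eqref{eq:minorrank}, the required identity becomes
\[r(T\cup A)+r(A)=r((T\cap X)\cup A)+r((T-X)\cup A).\]
To derive this, I would set $Y=U-X$ (which is also a separator), and apply the separator identity $r(S)=r(S\cap X)+r(S\cap Y)$ to the two sets on the right-hand side. Because $(T\cap X)\cap Y=\emptyset$ and $(T-X)\cap X=\emptyset$, each expansion reduces to a term involving $A$ alone and a term that combines with the matching contribution from the other set. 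Summing and regrouping yields $[r((T\cup A)\cap X)+r((T\cup A)\cap Y)]+[r(A\cap X)+r(A\cap Y)]$, which equals $r(T\cup A)+r(A)$ by the separator property of $X$ applied now to $T\cup A$ and to $A$.

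There is no real obstacle here: the argument is entirely a book-keeping exercise in partitioning subtransversals across $X$, $Y$, $A$ and their complements, the only subtle point being to notice that the intersections with $X-U_A$ and with $X$ coincide for any subtransversal of $\Omega'$, which is precisely what allows the separator property of $Q$ to be transferred to $Q|A$.
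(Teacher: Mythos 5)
Your argument is correct and is exactly the ``straightforward manipulation'' the paper alludes to when it omits the proof: one checks that $X-U_A$ is a union of skew classes avoiding $A$, notes that intersections with $X-U_A$ and with $X$ agree for subtransversals of $Q|A$, and then applies the separator identity for $X$ to the subtransversals $(T\cap X)\cup A$, $(T-X)\cup A$, $T\cup A$ and $A$, which regroups to the required rank equation via \eqref{eq:minorrank}. No gaps; this matches the paper's intended approach.
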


Next we see that whenever we take a minor with respect to a sub-transversal of some skew classes forming a separator, it does not matter which subtransversal we choose to form the minor and the resulting multimatroid has a simple description.
\begin{lemma}\label{lem:ref}
Let $Q=(U,\Omega,r)$ be a multimatroid, $X$ be a separator of $Q$ and $A$ be a subtransversal such that $A \subseteq X$ and $A$ meets every skew class included in $X$. Then $Q|A$ is the multimatroid with ground set $U-X$, having as skew classes the skew classes of $Q$ avoiding $X$ and as rank function the restriction of $r$ to subtransversals of $U-X$.
\end{lemma}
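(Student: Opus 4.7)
The plan is to unfold the definition of $Q|A$ and verify each of the three claims (ground set, skew classes, rank function) by using the separator property of $X$ together with the hypothesis that $A$ meets every skew class contained in $X$.

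First I would identify the skew classes of $Q|A$. By the definition of the minor, these are the skew classes $\omega \in \Omega$ with $\omega \cap A = \emptyset$. Since $X$ is a separator, it is a union of skew classes, so any $\omega \in \Omega$ is either contained in $X$ or disjoint from $X$. If $\omega \subseteq X$, the hypothesis that $A$ meets every skew class included in $X$ gives $\omega \cap A \ne \emptyset$. Conversely, if $\omega \cap X = \emptyset$ then $\omega \cap A = \emptyset$ because $A \subseteq X$. Hence the skew classes of $Q|A$ are exactly the skew classes of $Q$ avoiding $X$, and their union is $U - X$, since $X$ is itself the union of the skew classes contained in $X$. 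This matches the proposed ground set and partition.

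For the rank function, let $Y$ be any subtransversal of the ground set of $Q|A$, so $Y \subseteq U - X$ and $Y$ meets each of the relevant skew classes in at most one element. Since $A$ and $Y$ lie in disjoint families of skew classes, $Y \cup A$ is itself a subtransversal of $\Omega$, and we may apply the separator identity:
\[
r(Y \cup A) = r((Y \cup A) \cap X) + r((Y \cup A) - X) = r(A) + r(Y).
\]
By the definition of the rank function of a minor,
\[
r'(Y) = r(Y \cup A) - r(A) = r(Y),
\]
which is exactly the restriction of $r$ to subtransversals of $U - X$.

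There is no real obstacle here; the argument is essentially bookkeeping. The one point that deserves a moment of care is confirming that $Y \cup A$ is a subtransversal (so that the separator identity applies), but this is immediate from $Y \subseteq U - X$ and $A \subseteq X$ together with the fact that $Y$ and $A$ are themselves subtransversals of disjoint sub-partitions of $\Omega$.
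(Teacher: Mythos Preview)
Your proof is correct and follows essentially the same approach as the paper: the key step is applying the separator identity to $r(Y\cup A)$ and observing that $(Y\cup A)\cap X = A$ and $(Y\cup A)-X = Y$, exactly as you do. The only difference is that you spell out the verification of the ground set and skew classes of $Q|A$, which the paper leaves implicit.
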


\begin{proof}
We must check that the rank function of $Q|A$ is as described. Let $S$ be a subtransversal of the skew classes of $Q|A$. Then
\[ r_{Q|A}(S) = r(S\cup A) - r(A) = r((S\cup A) \cap X) + r((S\cup A)-X) - r(A).\]
However $(S\cup A) \cap X = A$ and $(S \cup A)- X = S$. Thus $r_{Q|A}(S) = r(S)$ as required. \qed
\end{proof}

%

We are now in a position to prove our main result.
\begin{prooft}
Suppose that (i) does not hold.

By Lemma~\ref{rank1}, $\{e\}$ is independent in $Q$.
By Theorem~\ref{scum}, we may assume that $A$ is independent in $Q$.

Now $Q|e$ has a proper separator $X$.
Let $Y$ be the complement of $X$ in $Q|e$.
As $Q|A$ has no separator, Lemma~\ref{minorsep} implies that the elements in $Q|A$ are all contained in $X$ or all contained in $Y$.
Without loss of generality, since both $X$ and $Y$ are separators in $Q|e$, we assume that the elements of $Q|A$ are contained in $Y$.

By Lemma~\ref{Xcct}, we know that $Q$ has a circuit $C$ such that $e\in C$ and $C\subseteq X\cup e$.
Let $Z$ be a subtransversal of $Q|e$ containing $C-e$ and meeting every skew class in $X$, and let $A'$ be the restriction of $A$ to the skew classes in $X$. Then Lemma~\ref{lem:ref} implies that $Q|e|A'=Q|e|Z$. Hence $Q|A$ is a minor of $Q|e|Z$ which is a minor of $Q|C$.
%
%

As $C$ is a circuit in $Q$, the rank $r_{Q|(C-e)}(e)=r_{Q}(C)-r_{Q}(C-e)=0$.
Hence $e$ is singular in $Q|(C-e)$.
Lemma~\ref{sing} implies that $Q|C=( Q|(C-e))|x=(Q|x)|(C-e) $ for all $x$ in the skew class containing $e$.
Theorem~\ref{chainmm} implies that (ii) holds. \qed
\end{prooft}

Notice that if case (i) of Theorem~\ref{splitmm} does not hold, then $Q|x$ is connected and contains $Q|A$ as a minor for every $x$ in the skew class containing $e$ except for $e$. In contrast, if case (i) holds, then it is possible that $Q|A$ is not a minor of $Q|x$ for any $x$ in the skew class of $e$ except $e$ itself. The following example illustrates this.

\begin{example}
Let $Q$ be the multimatroid with skew classes $\{a,a',a''\}$, $\{b,b',b''\}$, $\{c,c',c''\}$ and $\{d,d',d''\}$, and bases as shown in Table~\ref{tab:multi}.
In the next section we will describe a correspondence due to Brijder and Hoogeboom~\cite{BH14} between certain delta-matroids and tight $3$-matroids.
In this case $Q$ is constructed from the delta-matroid with ground set $\{a,b,c,d\}$ and collection of feasible sets
\[ \mathcal{F} = \{\{\emptyset\},\{a\},\{b\},\{c\},\{d\},\{a,b\},\{c,d\},\{a,b,c\},\{a,b,d\},\{a,c,d\},\{b,c,d\}\}.\]
To verify that $\mathcal F$ is the collection of feasible sets of a delta-matroid, we must verify that the symmetric exchange axiom holds.
Because $\mathcal{F}$ contains every feasible set with odd size, it follows that whenever $F$ has even size, $F\btu e \in \mathcal{F}$ for every $e\in\{a,b,c,d\}$. Due to symmetry, it remains to show that the symmetric exchange axiom holds when $F_1 = \{a\}$ or $F_1=\{a,b,c\}$. We may assume that $|F_1 \btu F_2| \geq 3$. Thus the only remaining pairs of sets for which the symmetric exchange axiom must be verified are given by
\[(F_1,F_2) \in \{(\{a\},\{c,d\}),(\{a\},\{b,c,d\}),(\{a,b,c\},\{d\}),(\{a,b,c\},\emptyset),(\{a,b,c\},\{c,d\})\}.\]
Each of these cases is easily checked.

Both $\{a,b,c''\}$ and $\{a,b,d''\}$ are circuits of $Q$, so the fundamental graph of $Q$ with respect to the basis $\{a,b,c,d\}$ is connected. Consequently it follows from Theorem~\ref{th:fundgr} that $Q$ is connected.

Now consider $Q|a'$.
Neither $Q|a$ nor $Q|a''$ contain $Q|a'$ as a minor, because $Q|a'$ has more bases than the other two. Moreover $Q|a'$ is connected, because $\{b,c,d'\}$ is one of its circuits.

Note that in this example something slightly stronger holds: neither $Q|a$ nor $Q|a''$ is isomorphic to $Q|a'$. There are connected tight $3$-matroids with three skew classes containing an element $a$ such that $Q|a$ is connected but for any $x$ other than $a$ in the skew class containing $a$, $Q|x$ does not contain $Q|a$ as minor. However in all these cases $Q|x$ is isomorphic to $Q|a$ whenever $Q|x$ is connected. Consequently $Q$ is the smallest example for which this stronger property holds.
\end{example}

\begin{table}[ht]\begin{center}
\begin{tabular}{ccc}
$\{a,b,c,d\}$&$\{a',b,c,d\}$&$\{a'',b,c,d'\}$\\
$\{a,b,c,d'\}$&$\{a',b,c,d''\}$&$\{a'',b,c,d''\}$\\
$\{a,b,c',d\}$&$\{a',b,c',d'\}$&$\{a'',b,c',d\}$\\
$\{a,b,c',d'\}$&$\{a',b,c',d''\}$&$\{a'',b,c',d''\}$\\
$\{a,b',c,d\}$&$\{a',b,c'',d\}$&$\{a'',b,c'',d\}$\\
$\{a,b',c,d''\}$&$\{a',b,c'',d'\}$&$\{a'',b,c'',d'\}$\\
$\{a,b',c',d'\}$&$\{a',b',c,d\}$&$\{a'',b',c,d'\}$\\
$\{a,b',c',d''\}$&$\{a',b',c,d'\}$&$\{a'',b',c,d''\}$\\
$\{a,b',c'',d\}$&$\{a',b',c',d\}$&$\{a'',b',c',d\}$\\
$\{a,b',c'',d'\}$&$\{a',b',c',d''\}$&$\{a'',b',c',d'\}$\\
$\{a,b'',c,d'\}$&$\{a',b',c'',d'\}$&$\{a'',b',c'',d\}$\\
$\{a,b'',c,d''\}$&$\{a',b',c'',d''\}$&$\{a'',b',c'',d''\}$\\
$\{a,b'',c',d\}$&$\{a',b'',c,d'\}$&$\{a'',b'',c',d'\}$\\
$\{a,b'',c',d''\}$&$\{a',b'',c,d''\}$&$\{a'',b'',c',d''\}$\\
$\{a,b'',c'',d\}$&$\{a',b'',c',d\}$&$\{a'',b'',c'',d'\}$\\
$\{a,b'',c'',d'\}$&$\{a',b'',c',d'\}$&$\{a'',b'',c'',d''\}$\\
&$\{a',b'',c'',d\}$&\\
&$\{a',b'',c'',d''\}$&
\end{tabular}
\end{center}
\caption{Bases of the multimatroid $Q$}
\label{tab:multi}
\end{table}

\section{Applications to delta-matroids and ribbon graphs}

We begin by briefly describing the relationship between delta-matroids and $2$-matroids from~\cite{mmi}.
Bouchet notes in~\cite{mmi} that a $2$-matroid is determined by its bases, proving the following.
\begin{theorem}\label{th:Bouchetbases}
Let $U$ be a finite set and $\Omega$ be a partition of $U$ into pairs. Then a non-empty collection $\mathcal B$ of transversals of $\Omega$ is the collection of bases of a $2$-matroid if and only if whenever $B_1$ and $B_2$ belong to $\mathcal B$ and $p$ is a skew pair such that $p \subseteq B_1 \btu B_2$, there is a skew pair $q$ such that $B_1 \btu (p \cup q) \in \mathcal B$.
\end{theorem}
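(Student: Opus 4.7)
The proof splits into two directions.

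For the forward direction, assume $\mathcal B$ is the collection of bases of a $2$-matroid $Q=(U,\Omega,r)$. Given $B_1,B_2\in\mathcal B$ and a skew pair $p=\{x,y\}\subseteq B_1\btu B_2$, we may assume $x\in B_1$ and $y\in B_2$. Let $A=B_1\setminus\{x\}$, an independent near-transversal avoiding $\omega:=p$. Axiom~(4) at $A$ and $\omega$, combined with $r(A\cup x)=r(B_1)=|A|+1$ and the upper bound from axiom~(2), forces $r(A\cup y)\in\{|A|,|A|+1\}$. If $r(A\cup y)=|A|+1$ then $B_1\btu p=A\cup y$ is a basis, and we take $q=p$. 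Otherwise $A\cup y$ is dependent and so contains a circuit $C$ through $y$; since $\{y\}\subseteq B_2$ is independent, $|C|\ge 2$, so we pick $z\in C\setminus\{y\}$ and let $\omega_z=\{z,z'\}$ be the skew class of $z$ (distinct from $\omega$ since $z\in A$). A standard multimatroid circuit-elimination argument, a consequence of submodularity, shows that $(A\setminus z)\cup y$ is independent. Applying axiom~(4) to this near-transversal and $\omega_z$, together with $r(A\cup y)=|A|$, forces $(A\setminus z)\cup\{y,z'\}=B_1\btu(p\cup\omega_z)$ to be a basis, so we take $q=\omega_z$.

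For the backward direction, assume $\mathcal B$ satisfies the exchange property and define $r(A):=\max_{B\in\mathcal B}|A\cap B|$ on subtransversals $A$. Axioms~(1) and~(2) are immediate from the definition. For axiom~(4) with $\omega=\{x,y\}$ avoiding $A$, any $B\in\mathcal B$ attaining $r(A)$ contains exactly one of $x,y$ (being a transversal), so one of $r(A\cup x),r(A\cup y)$ equals $r(A)+1$ while the other is at least $r(A)$. The maximal independent subtransversals coincide with $\mathcal B$: each $B\in\mathcal B$ satisfies $r(B)=|B|$ and, being a transversal, cannot be extended within $\mathcal S(\Omega)$; conversely any maximal independent subtransversal must meet every skew class (else extendable using an element of a witnessing basis) and so equals a member of $\mathcal B$.

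The main obstacle is submodularity (axiom~(3)). I plan to establish it in the equivalent local form $r(A\cup x)+r(A\cup y)\geq r(A)+r(A\cup\{x,y\})$ for distinct skew classes $\omega_x\ni x$ and $\omega_y\ni y$, both avoiding $A$. When a basis $D$ attaining $r(A\cup\{x,y\})$ contains neither or both of $x,y$, the inequality is direct. The crucial case is when $D$ contains exactly one of them, say $x$: here I apply the exchange axiom to $D$ and a basis $E$ attaining $r(A\cup y)$, iteratively swapping disagreeing skew pairs within $A$ while preserving $x\in D$. The flexibility to take $q=p$ or $q\ne p$ in the exchange axiom is essential for controlling which elements of the basis are preserved during these swaps. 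This produces bases achieving $r(A\cup x)$ and $r(A\cup y)$ with sufficiently large intersections with $A$, giving local submodularity; full submodularity then follows by standard induction on set sizes.
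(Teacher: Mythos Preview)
The paper does not give its own proof of this theorem: it is stated with attribution to Bouchet~\cite{mmi} and used as a black box. So there is no paper proof to compare against, and the relevant question is whether your argument stands on its own.

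Your forward direction is essentially fine. The only point to tighten is the ``standard multimatroid circuit-elimination argument'': you should say explicitly that the restriction of $r$ to subsets of the fixed transversal $B_1\btu p$ is a matroid rank function (axioms~(1)--(3) restricted to a single transversal are exactly the matroid rank axioms), so the usual fact that an independent set plus one element contains a unique circuit, and removing any element of that circuit restores independence, applies verbatim. With that said, your application of axiom~(4) at $(A\setminus z)\cup y$ and $\omega_z$ is correct and yields the desired basis.

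The backward direction has a genuine gap. Axioms~(1), (2), (4) and the identification of the maximal independent sets with $\mathcal B$ go through as you describe. But your treatment of submodularity is only a plan, and the plan is not obviously sound. Already in the ``easy'' case where a basis $D$ attaining $r(A\cup\{x,y\})$ contains both $x$ and $y$, the inequality is not direct: from $D$ you get $r(A\cup x)\ge r(A\cup\{x,y\})-1$ and $r(A\cup y)\ge r(A\cup\{x,y\})-1$, hence $r(A\cup x)+r(A\cup y)\ge 2r(A\cup\{x,y\})-2$, but you need $\ge r(A)+r(A\cup\{x,y\})$, which is stronger whenever $r(A)>r(A\cup\{x,y\})-2$. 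So even here you must invoke the exchange axiom, not just count. In the crucial mixed case your description (``iteratively swapping disagreeing skew pairs within $A$ while preserving $x\in D$'') does not explain how each swap is guaranteed to increase $|D\cap A|$, nor why the process terminates with the required bound; the exchange axiom lets you replace one skew pair, but the replacement pair $q$ may land outside $A$ or may disturb the bookkeeping. Bouchet's original proof in~\cite{mmi} handles this by first building the theory of independent sets (via an exchange property on independents), proving that all maximal independents in a subtransversal have the same size, and only then defining rank; submodularity then follows from the independence-augmentation lemma rather than from basis exchange directly. If you want to keep your rank-first approach you will need to supply a careful inductive exchange argument that actually controls $|D\cap A|$, and you should expect this to be the bulk of the proof rather than a one-line remark.
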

Let $D=(E,\mathcal F)$ be a delta-matroid. Now we construct a $2$-matroid $Q_2(D)$ as follows. The ground set
is $U=\{e,e':e\in E\}$. The set of skew classes is $\Omega=\{\{e,e'\}:e\in E\}$. For a subset $A$ of $E$, we define $A'=\{e':e\in E\}$. Then $Q_2(D)$ has a basis $F \cup (E-F)'$ corresponding to each feasible set $F$ of $D$. It follows from Theorem~\ref{th:Bouchetbases} that $Q_2(D)$ is indeed a $2$-matroid. On the other hand suppose that $Q=(U,\Omega,r)$ is a $2$-matroid, $\mathcal B$ is its collection of bases and $T$ is a transversal of $\Omega$. Then the \emph{section} of $Q$ by $T$ is a delta matroid with ground set $T$ and set of feasible sets equal to $\{B\cap T:B\in\mathcal{B}\}$. Using Theorem~\ref{th:Bouchetbases}, one may verify that a section is indeed a delta-matroid. In~\cite{mmiii}, Bouchet proves that $Q_2(D)$ is tight if and only if $D$ is even and, conversely, that every section of $Q$ is even if and only if $Q$ is tight. Note that if one section of $Q$ is even then all sections of $Q$ are even.

It is not difficult to check that if $e$ is an element of a delta-matroid $D$, then $Q_2(D/e)=Q_2(D)|e$ and $Q_2(D\ba e)=Q_2(D)|e'$. Furthermore one may also define a \emph{direct-sum} for multimatroids. Let $Q_1$ and $Q_2$ be multimatroids on disjoint ground sets $U_1$ and $U_2$, sets of skew classes $\Omega_1$ and $\Omega_2$ and sets of bases $\mathcal B_1$ and $\mathcal B_2$ respectively. Then
$Q_1 \oplus Q_2$ is the multimatroid with ground set $U_1 \cup U_2$, set of skew classes $\Omega_1 \cup \Omega_2$ and set of bases $\{B_1\cup B_2: B_1 \in \mathcal{B}_1 \text{ and } B_2 \in \mathcal{B}_2\}$. Now it is easy to see that $Q$ fails to be connected if and only if $Q=Q_1 \oplus Q_2$ for two multimatroids $Q_1$ and $Q_2$, each of which has a non-empty ground set. It follows from this that $Q_2(D)$ is connected if and only if $D$ is connected and, conversely, that every section of $Q$ is connected if and only if $Q$ is connected. Again, note that if one section of $Q$ is connected, then all sections of $Q$ are connected.

Consequently all the key notions in delta-matroids and $2$-matroids correspond and we may deduce the following from Theorem~\ref{chainmm} and Theorem~\ref{splitmm}, respectively.

\begin{corollary}
\label{chain}
Let $D$ be a connected even delta-matroid.
If $e\in E(D)$, then $D\ba e$ or $D/e$ is connected.
\end{corollary}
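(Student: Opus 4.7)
The plan is to reduce this to the chain theorem for tight multimatroids (Theorem~\ref{chainmm}) by passing through the correspondence $D \mapsto Q_2(D)$ between even delta-matroids and tight $2$-matroids that was just set up in this section.

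First, I would note that since $D$ is even, the discussion above guarantees that $Q_2(D)$ is tight, and since $D$ is connected, $Q_2(D)$ is connected as well. The element $e \in E(D)$ determines the skew class $\{e,e'\}$ of $Q_2(D)$, a skew class of size $k=2$. Applying Theorem~\ref{chainmm} to this skew class yields that at least $k-1 = 1$ of the two minors $Q_2(D)|e$ and $Q_2(D)|e'$ is connected.

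Next, I would translate this back to $D$. Using the identities $Q_2(D/e) = Q_2(D)|e$ and $Q_2(D\ba e) = Q_2(D)|e'$ recalled earlier, the statement becomes: at least one of $Q_2(D/e)$ and $Q_2(D \ba e)$ is connected. Since connectedness of a $2$-matroid transfers to each of its sections (and in particular $D/e$ and $D\ba e$ are themselves sections of the corresponding $2$-matroids), I conclude that at least one of $D/e$, $D\ba e$ is connected, which is exactly the desired statement.

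There is no real obstacle here: every ingredient has been assembled in the excerpt above, and the argument is a routine dictionary lookup. The only minor issue to keep in mind is the degenerate case where $e$ is a loop or coloop of $D$, but then $D\ba e = D/e$ and the conclusion is immediate; equivalently, in this case $\{e,e'\}$ may contain a singular element of $Q_2(D)$, but Theorem~\ref{chainmm} still supplies a connected minor which translates to the same connected delta-matroid either way.
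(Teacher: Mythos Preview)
Your proposal is correct and is exactly the argument the paper has in mind: the corollary is stated immediately after the dictionary between even delta-matroids and tight $2$-matroids is set up, and the paper simply notes that it follows from Theorem~\ref{chainmm} via this correspondence. Your write-up just makes explicit the translation steps (tightness from evenness, connectedness preserved, and $Q_2(D/e)=Q_2(D)|e$, $Q_2(D\ba e)=Q_2(D)|e'$) that the paper leaves to the reader.
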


\begin{corollary}
\label{splitter}
Let $D$ be a connected even delta-matroid with a connected minor $D'$.
If $e\in E(D)-E(D')$, then $D\ba e$ or $D/e$ is connected with $D'$ as a minor.
\end{corollary}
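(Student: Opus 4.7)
The plan is to derive Corollary~\ref{splitter} from Theorem~\ref{splitmm} applied to the associated tight $2$-matroid $Q = Q_2(D)$. Since $D$ is connected and even, $Q$ is connected and tight, as recorded just before the corollary statement. First, I would express the minor relationship in multimatroid form: write $D' = D \ba X / Y$ for some disjoint $X, Y \subseteq E(D)$ with $e \in X \cup Y$, and use the identities $Q_2(D/a) = Q|a$ and $Q_2(D \ba a) = Q|a'$ (together with order-independence of minor-taking) to obtain $Q_2(D') = Q|S$, where $S = Y \cup \{x': x \in X\}$ is a subtransversal of $Q$. The connectivity of $D'$ then gives that $Q|S$ is connected, and since $e \in X \cup Y$ the subtransversal $S$ is non-empty.

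Next, let $f$ be the unique element of the skew class $\{e, e'\}$ that lies in $S$ (so $f = e$ when $e \in Y$ and $f = e'$ when $e \in X$), and let $\bar f$ denote the other element of that skew class. Applying Theorem~\ref{splitmm} to $Q$, $S$ and $f$, two cases arise. In case (i), $Q|f$ is connected; moreover, because $f \in S$, the associativity property $(Q|A)|B = Q|(A \cup B)$ recorded in Section~2 gives $Q|S = (Q|f)|(S - f)$, so $Q|S$ is a minor of $Q|f$ for free. In case (ii), $Q|\bar f$ is connected and already has $Q|S$ as a minor by the conclusion of the theorem. Translating either conclusion back through the identities $Q|e = Q_2(D/e)$ and $Q|e' = Q_2(D \ba e)$, one of $D/e$ or $D \ba e$ is connected with $D'$ as a minor, which is exactly what is claimed.

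There is no real obstacle beyond a careful translation; the hard work has already been done in establishing that taking minors, connectivity, and direct sums in tight $2$-matroids correspond exactly to the analogous notions for even delta-matroids. The only point that needs attention is the bookkeeping: keeping track of which element of the skew pair $\{e, e'\}$ encodes contraction and which encodes deletion, and observing that case (i) of Theorem~\ref{splitmm} already delivers the minor $Q|S$ without further argument precisely because $f$ was chosen to lie inside $S$.
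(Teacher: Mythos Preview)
Your proposal is correct and is precisely the argument the paper has in mind: the paper does not spell out a separate proof of Corollary~\ref{splitter} but simply records that the correspondence $D \leftrightarrow Q_2(D)$ respects minors, connectivity, evenness/tightness, and then invokes Theorem~\ref{splitmm}. Your write-up fills in exactly these translation details, including the observation that in case~(i) the minor relation $Q|S = (Q|f)|(S-f)$ comes for free because $f\in S$.
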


Because every matroid is an even delta-matroid, we also immediately obtain Theorems~\ref{th:tutte} and~\ref{th:bry} as corollaries.
Furthermore, the example that Bouchet gave in~\cite{mmiii} to show that the chain theorem for connected tight multimatroids does not hold for connected multimatroids in general is a $2$-matroid.
Hence this example also shows that Corollary~\ref{chain} does not hold for connected delta-matroids in general.

Ribbon graphs provide an alternative description of cellularly embedded graphs that is more natural for the present setting.
A \emph{ribbon graph} $G =\left(  V(G),E(G)  \right)$ is a surface with boundary, represented as the union of two  sets of  discs: a set $V (G)$ of \emph{vertices} and a set of \emph{edges} $E (G)$ with the following properties.
\begin{enumerate}
 \item The vertices and edges intersect in disjoint line segments.
 \item Each such line segment lies on the boundary of precisely one vertex and precisely one edge.
 \item Every edge contains exactly two such line segments.
\end{enumerate}

\begin{figure}
\centering
\subfigure[A cellularly embedded graph $G$.]{
\labellist \small\hair 2pt
\pinlabel {$1$} at   84 14
\pinlabel {$2$} at    135 19
\pinlabel {$3$} at    66 32
\pinlabel {$4$} at   113 32
\endlabellist
\raisebox{0mm}{\includegraphics[height=2cm]{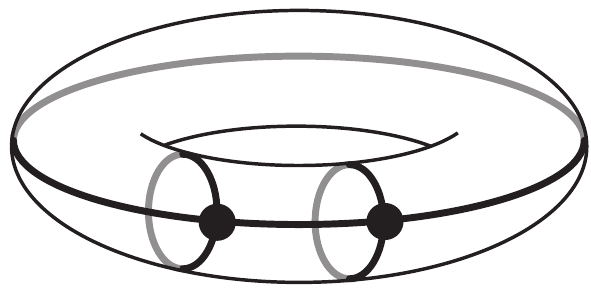}}
 \label{f.desca}
}
\hspace{5mm}
\subfigure[$G$ as a ribbon graph.]{
\labellist \small\hair 2pt
\pinlabel {$1$} at   60 22.7
\pinlabel {$2$} at   54 45.6
\pinlabel {$3$} at   38 7
\pinlabel {$4$} at   79 7
\endlabellist
\includegraphics[height=2cm]{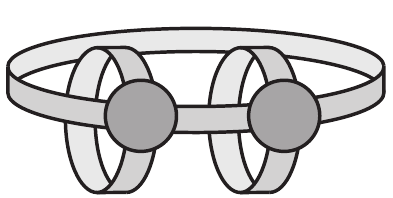}
 \label{f.descb}
}

\caption{Embedded graphs and ribbon graphs.}
\label{f.desc}
\end{figure}

It is well-known that ribbon graphs are just descriptions of cellularly-embedded graphs (see for example~\cite{GT87}).
We say that two ribbon graphs are \emph{equivalent} if they define equivalent cellularly embedded graphs, and we consider ribbon graphs up to equivalence. This means that ribbon graphs are considered up to homeomorphisms that preserve the graph structure  of the ribbon graph and the cyclic order of half-edges at each of its vertices.
We say that a ribbon graph is \emph{orientable} if it is orientable when regarded as a surface with boundary. A loop in a ribbon graph is \emph{orientable} if the subgraph comprising the loop and the vertex it meets is an orientable ribbon graph.


Let $G=(V,E)$ be a ribbon graph. If $e$ is an edge of a ribbon graph $G$, then \emph{edge deletion} is defined by $G\ba e= (V, E-e)$.
The definition of \emph{edge contraction} $G/e$ is a little more involved. For the purposes of this paper, we define it merely by illustrating its effect on different types of edges as shown in Table~\ref{tablecontractrg}. For a formal definition, see~\cite{delta-ribbon,EMMbook}. It is not too difficult to show that the definitions may be extended to deleting or contracting sets of edges. If some edges in a ribbon graph are selected for deletion and some others are selected for contraction, then the same ribbon graph will be produced regardless of the order of operations.
Again, for full details, see~\cite{delta-ribbon,EMMbook}.
If $H$ is obtained from a ribbon graph $G$ by a sequence of edge deletions, vertex deletions, and edge contractions, then we say that $H$ is a \emph{minor} of $G$.

\begin{table}
\centering
\begin{tabular}{|c||c|c|c|}\hline
 &  non-loop & non-orientable loop&orientable loop\\ \hline
\raisebox{6mm}{$G$} &
\includegraphics[scale=.25]{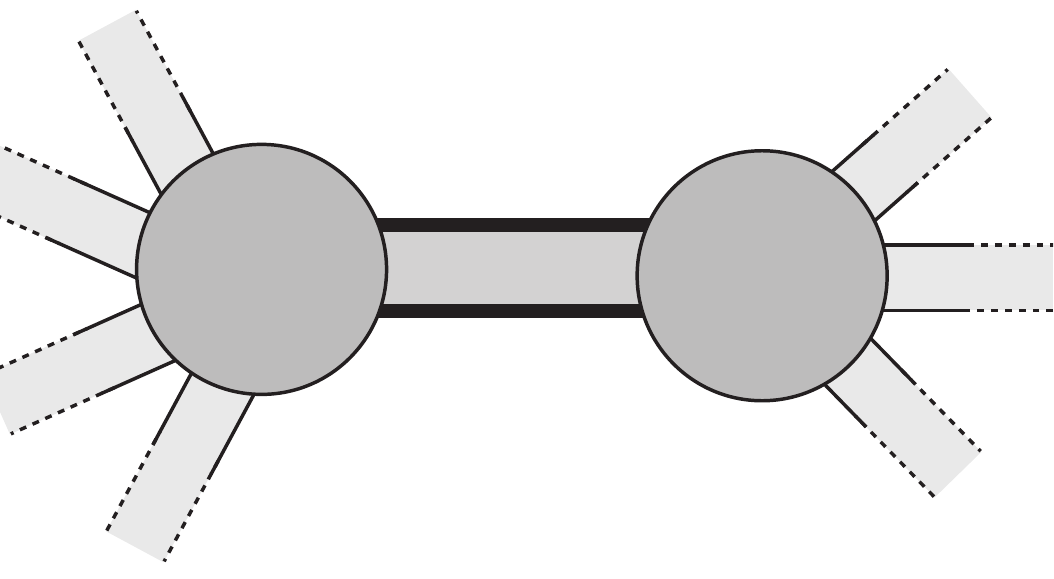} &\includegraphics[scale=.25]{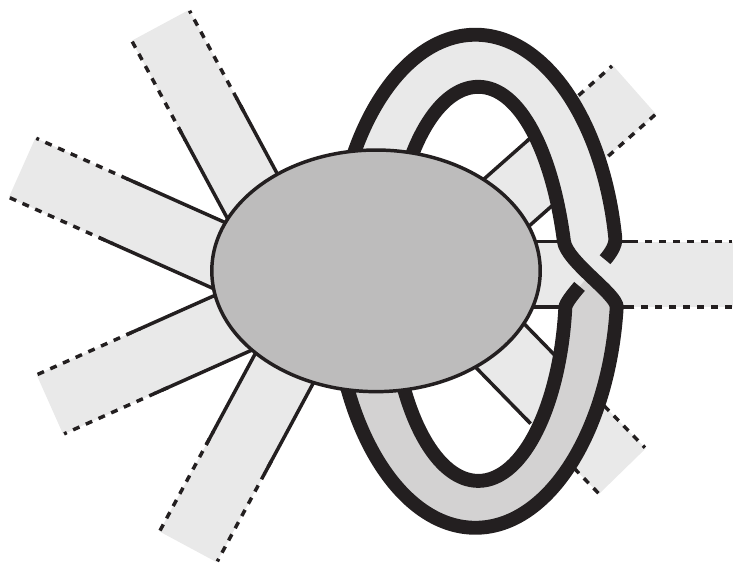} &\includegraphics[scale=.25]{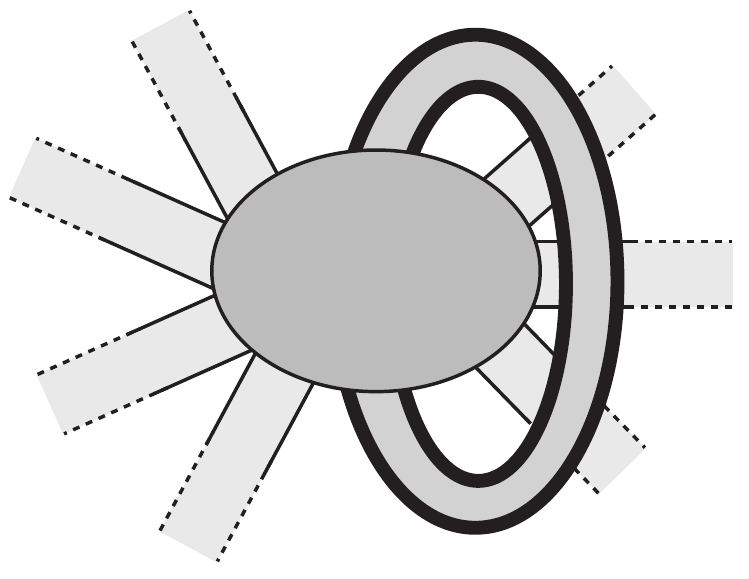}
\\ \hline
\raisebox{6mm}{$G/e$} &
\includegraphics[scale=.25]{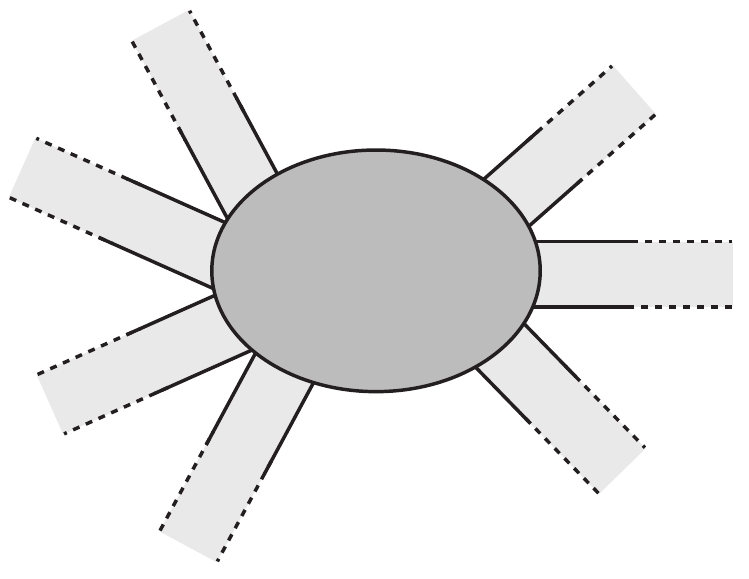} &\includegraphics[scale=.25]{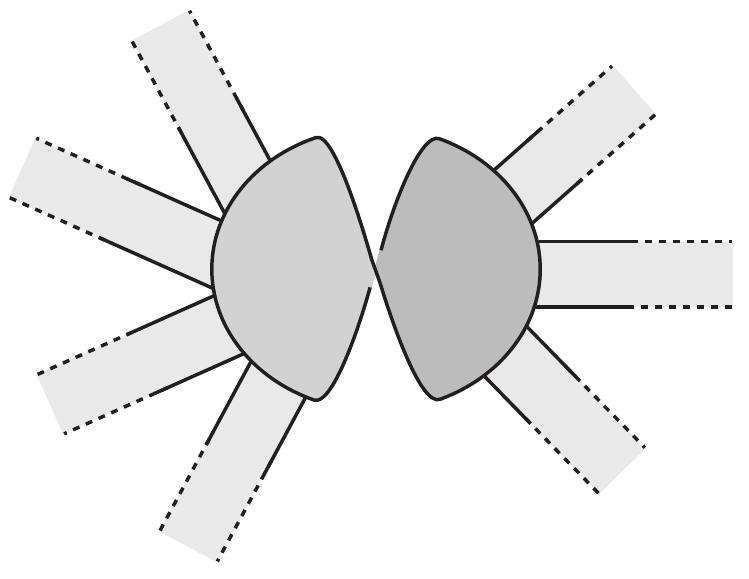}&\includegraphics[scale=.25]{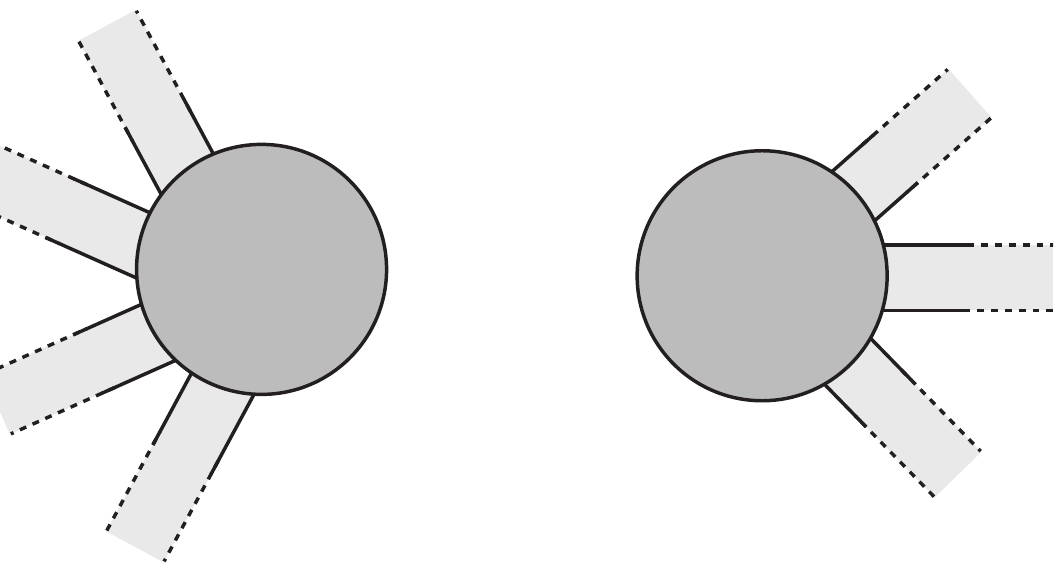} \\\hline
\raisebox{6mm}{$G*{e}$} &
\includegraphics[scale=.25]{ch4_35.pdf} &\includegraphics[scale=.25]{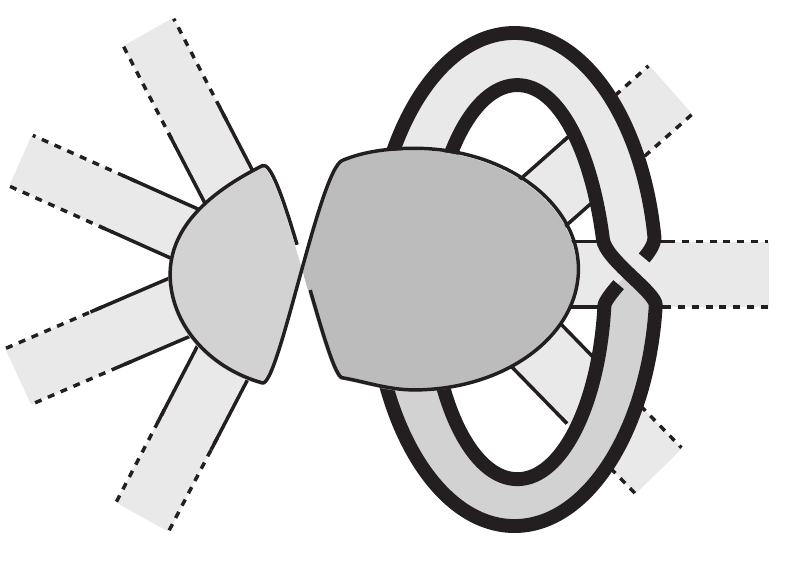} &\includegraphics[scale=.25]{ch4_38.pdf}
\\ \hline
\end{tabular}
\caption{Contraction and partial dual of an edge $e$ (highlighted in bold) in a ribbon graph.}
\label{tablecontractrg}
\end{table}

A \emph{quasi-tree} of a ribbon graph $G$ is a subgraph $(V(G),E')$, where $E'\subseteq E(G)$, that has a single boundary component for every component of $G$.
Note that each component of a quasi-tree of $G$, when viewed as a cellularly-embedded graph, has a single face.
In~\cite{delta-twist}, Chun, Moffatt, Noble, and Rueckriemen proved the following theorem, which is a restatement of a result by Bouchet~\cite{ab2}.

\begin{theorem}
\label{deltagraph}
Let $G$ be a ribbon graph with edge set $E$ and quasi-tree collection $\mathcal{Q}$.
Then $(E,\mathcal{Q})$ is a delta-matroid.
\end{theorem}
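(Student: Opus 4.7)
The aim is to verify the symmetric exchange axiom for $\mathcal{Q}$: given quasi-trees $Q_1, Q_2 \in \mathcal{Q}$ and $x \in Q_1 \btu Q_2$, produce $y \in Q_1 \btu Q_2$ with $Q_1 \btu \{x,y\} \in \mathcal{Q}$. My first step would be to reduce to the case that $G$ is connected: a quasi-tree of a ribbon graph is precisely a disjoint union of one quasi-tree from each connected component, so the symmetric exchange axiom can be checked componentwise.

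Next I would exploit the partial-dual operation $G \mapsto G^A$ indicated in Table~\ref{tablecontractrg} together with its fundamental property, namely that $\mathcal{Q}(G^A) = \{Q \btu A : Q \in \mathcal{Q}(G)\}$ for every $A \subseteq E(G)$. Applying this with $A = Q_1$ lets me pass to $G' = G^{Q_1}$, replacing $(Q_1, Q_2)$ by $(\emptyset, Q_1 \btu Q_2)$; the axiom to verify becomes: if $Q' \in \mathcal{Q}(G')$ and $x \in Q'$, then either $\{x\} \in \mathcal{Q}(G')$ or there exists $y \in Q' - x$ with $\{x,y\} \in \mathcal{Q}(G')$. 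Because $G$ is connected and $\emptyset$ is a quasi-tree of $G'$, the subgraph $(V(G'), \emptyset)$ has a single boundary component, which forces $G'$ to be a \emph{bouquet}: a ribbon graph with a single vertex.

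The heart of the argument is now a bouquet statement. A bouquet $G'$ is encoded by a chord diagram around the boundary of its unique vertex, together with a sign at each chord recording whether the corresponding loop is orientable. To $G'$ I would associate a symmetric matrix $M$ over $\mathbb{F}_2$ indexed by $E(G')$, whose off-diagonal entries record interlacement of chords and whose diagonal entries record non-orientability of loops. A direct computation using Euler's formula for boundary components of ribbon-graph surfaces shows that $E' \subseteq E(G')$ is a quasi-tree of $G'$ if and only if the principal submatrix $M[E']$ is non-singular over $\mathbb{F}_2$. Once this identification is in place, the symmetric exchange axiom reduces to the well-known fact (Bouchet's original motivating example of a delta-matroid) that the collection of index sets of non-singular principal submatrices of a symmetric matrix over a field satisfies symmetric exchange; the witness $y$ is produced by the principal pivot transform applied at $x$.

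The main obstacle is precisely the step tying the topological invariant (boundary components of the spanning sub-ribbon-graph) to the linear-algebraic invariant (non-singularity of a principal submatrix of $M$), as this demands careful bookkeeping of how interlacement, signs, and Euler characteristic conspire to control the boundary count. By contrast, the partial-dual reduction to a bouquet and the matrix-theoretic pivot step are cleanly separated and essentially formal once the key equivalence is established.
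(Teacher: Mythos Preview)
The paper does not prove this statement: it is quoted as a known result, attributed to Bouchet~\cite{ab2} with a restatement in~\cite{delta-twist}, and used thereafter as a black box. There is therefore no in-paper argument to compare against.

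Your outline is correct and is essentially one of the standard modern routes to the result. The reduction to a bouquet via partial duality uses only the set-system identity $\mathcal{Q}(G*A)=\{Q\btu A:Q\in\mathcal{Q}(G)\}$, which is a purely topological fact about partial duals and does not presuppose any delta-matroid structure, so there is no circularity. The heart of the matter, as you correctly flag, is the nullity formula expressing the number of boundary components of a spanning sub-ribbon-graph of a bouquet in terms of the corank over $\mathbb{F}_2$ of the corresponding principal submatrix of the signed interlacement matrix; this is classical (variants appear in work of Cohn and Lempel, Traldi, and Bouchet himself). Once that is in hand, the symmetric exchange axiom for index sets of non-singular principal submatrices is the standard pivot argument you cite. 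One small point of care: in this paper's logical order, partial duality for ribbon graphs is introduced only \emph{after} Theorem~\ref{deltagraph}, and the compatibility $D(G*A)=D(G)*A$ is phrased there in a way that already presumes $D(G)$ is a delta-matroid; your argument avoids circularity because you need only the set-level identity for quasi-trees, but in a self-contained write-up you would have to state and prove that identity directly from the topology before invoking it here.
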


If $G$ is a ribbon graph we denote its associated delta-matroid by $D(G)$. Any delta-matroid arising in this way is called \emph{ribbon-graphic}.
Deviating slightly from standard practice, we say that a vertex $v$ of a connected graph is a \emph{cut-vertex} if there is a partition of the edges of the graph into two non-empty sets, so that $v$ is the only vertex incident with edges belonging to both sets of the partition. In contrast with the standard definition of $2$-connectivity, in a graph with at least two edges, any vertex incident with a loop is a cut-vertex.
A graph is \emph{$2$-connected} if it has a single connected component and has no cut-vertex. The point of our definition of $2$-connectivity is that a graph is $2$-connected if and only if its cycle matroid is connected.

From any ribbon graph $G$, we can derive an (abstract) graph, which we call the \emph{underlying abstract graph}, with a vertex corresponding to each vertex of $G$ and an edge corresponding to each edge of $G$, with incidences between edges and vertices if the corresponding vertex and edge intersect in $G$.
A \emph{cut-vertex} of a connected ribbon graph $G$ is any vertex $v$ that is a cut-vertex of the underlying abstract graph.
If $v$ is a cut-vertex of $G$, with $P$ and $Q$ being two ribbon subgraphs that intersect in $v$, such that neither $E(P)$ nor $E(Q)$ is empty and $E(P)\dot\cup E(Q)=E(G)$, then we say that $G=P\oplus Q$.
In this case, knowledge of $P$ and $Q$ gives complete knowledge of the underlying abstract graph of $G$, but does not give complete knowledge of $G$.
For example, suppose that $E(P)$ and $E(Q)$ are loops $p$ and $q$, respectively.
Then $G$ depends on the order in the order in which $p$ and $q$ are met when traveling around the boundary of the vertex $v$ and whether or not they are orientable.
Suppose that both $p$ and $q$ are orientable loops. If they are met in
the order $p,p,q,q$ when traveling around the boundary of $v$, then $G$ has three boundary components, whereas if they are met in the order $p,q,p,q$, then $G$ has one boundary component.
In the first case, $D(G)$ is disconnected, but in the second case
it is connected.
Because of this distinction, the two possible ribbon graphs have different connectivities, which we now define precisely.

Let $G$ be a ribbon graph.
We say that $G$ is \emph{connected} if it consists of a single connected component.
Two cycles $C_1$ and $C_2$ in $G$ are said to be \emph{interlaced} if there is  a vertex $v$ such that $V(C_1)\cap V(C_2)=\{v\}$, and  $C_1$ and $C_2$ are met in the cyclic order $C_1\,C_2\,C_1\,C_2$ when traveling around the boundary of the vertex $v$.
We say that  $G$ is the \emph{join} of $P$ and $Q$, written $G=P\vee Q$, if $G=P \oplus Q$ and no cycle in $P$ is interlaced with a cycle in $Q$.
In other words, $G$ can be obtained as follows: choose an arc on a vertex of $P$ and an arc on a vertex of $Q$ such that neither arc intersects an edge, then identify the two arcs merging the two vertices on which they lie into a single vertex of $G$.
The join  is also known as the ``one-point join,'' the ``map amalgamation,'' and the ``connected sum'' in the literature.
A ribbon graph is \emph{$2$-connected} exactly when it is connected and it is not the join of any pair of its subgraphs.
We refer the reader to \cite{Mo5,Mof11c} for a fuller discussion of separability for ribbon graphs.

The following results from~\cite[Proposition~5.21, Proposition~5.3, and Corollary~5.14]{delta-ribbon} provide the tools we need to reformulate our delta-matroid results as ribbon graph results.

\begin{proposition}
\label{coniscon}
Let $G$ be a ribbon graph.
Then
\begin{enumerate}[label=(\roman*)]
\item $D(G)$ is connected if and only if $G$ is $2$-connected;
\item $D(G)$ is even if and only if $G$ is orientable; and
\item for any edge $e$ of $G$, $D(G/e)=D(G)/e$ and $D(G\ba e)=D(G)\ba e$.
\end{enumerate}
\end{proposition}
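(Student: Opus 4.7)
My plan is to tackle the three parts independently, handling (iii) first because it is the technical lemma on which connectivity arguments for (i) rely, then (ii) via Euler's formula, and finally (i), which I expect to be the hard part.

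For (iii), I would argue directly from the definitions. The quasi-trees of $G\ba e$ are precisely the spanning subgraphs $(V,F)$ with $F\subseteq E-e$ whose boundary components number equals the number of components of $G\ba e$; by inspection these coincide with those feasible sets of $D(G)$ that avoid $e$, which (provided $e$ is not a coloop, i.e.\ lies in some quasi-tree) is the definition of $D(G)\ba e$. The coloop and loop cases can be checked by hand. For the contraction identity, I would use the formal definition of ribbon graph contraction from \cite{delta-ribbon} and verify that the boundary-component count of $(V(G/e), F-e)$ in $G/e$ equals the boundary-component count of $(V,F)$ in $G$ whenever $e\in F$, yielding a bijection between quasi-trees of $G/e$ and quasi-trees of $G$ containing $e$.

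For (ii), I would use the Euler-genus calculation. For a connected ribbon graph $G$ with $v$ vertices, a quasi-tree $Q$ has a single face, so if $G$ is orientable of genus $g(Q)$ then $v-|Q|+1=2-2g(Q)$, forcing $|Q|\equiv v-1\pmod{2}$; hence all quasi-trees have the same parity and $D(G)$ is even. Conversely, if $G$ is non-orientable, I would locate a non-orientable loop or an embedded M\"obius band inside a quasi-tree and modify it along that band to produce a quasi-tree differing in size by exactly one, witnessing that $D(G)$ is not even. The disconnected case reduces to the connected components.

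For (i), which I expect to be the main obstacle, I would argue both directions. If $G$ is not $2$-connected, then $G=P\vee Q$ (or $G$ is disconnected, which is easier). The key geometric observation is that in a join, travelling along boundaries never crosses from $P$ to $Q$ except at the shared vertex, so the boundary components of a spanning subgraph $(V,F)$ split into those from $F\cap E(P)$ and those from $F\cap E(Q)$. Thus $F$ is a quasi-tree of $G$ iff $F\cap E(P)$ and $F\cap E(Q)$ are quasi-trees of $P$ and $Q$ respectively, so $D(G)=D(P)\oplus D(Q)$ is disconnected. For the converse, suppose $D(G)$ has a proper separator $X\subseteq E(G)$; I would show that the ribbon subgraphs induced by $X$ and by $E(G)-X$ can share at most one vertex, because if they shared two distinct vertices $u,v$ then any quasi-tree would need edges from both sides to merge the boundary around $u$ with that around $v$, contradicting the product structure of feasible sets across $X$ and $E(G)-X$. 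Similarly, interlacing of a cycle in one side with a cycle in the other would force a quasi-tree to fail decomposing across $X$, again contradicting separator-hood. This rules out interlacing and forces $G$ to be a join (or disconnected). The main obstacle here is the careful bookkeeping of boundary components to rule out cut-vertex configurations and interlaced cycles simultaneously, and I would lean on the results in \cite{Mo5,Mof11c} to keep this clean.
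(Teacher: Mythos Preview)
The paper does not actually prove this proposition: it is quoted verbatim from \cite[Proposition~5.21, Proposition~5.3, and Corollary~5.14]{delta-ribbon} and used as a black box. So there is no in-paper argument to compare your sketch against; any genuine proof you supply goes beyond what the present paper does.

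As to the sketch itself: your outline for (iii) and the forward direction of (ii) via Euler's formula are standard and sound. The converse of (ii) is under-specified: ``locate a non-orientable loop or an embedded M\"obius band inside a quasi-tree and modify it'' needs a precise construction showing that some single edge can be toggled in or out of a quasi-tree while keeping a single boundary component, and you have not said why such an edge exists. For (i), your ``not $2$-connected $\Rightarrow$ $D(G)$ disconnected'' direction is fine, but the converse as written is a real gap. The claim that a proper separator $X$ forces the two ribbon subgraphs to meet in at most one vertex, and moreover without interlacing, does not follow from the two-vertex argument you gave: the product structure of feasible sets across $X$ and $E-X$ does not by itself prevent $X$ and $E-X$ from sharing several vertices, because quasi-trees need not be spanning trees and boundary components can merge across shared vertices in more subtle ways than a path argument detects. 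The actual proofs in \cite{delta-ribbon} and \cite{Mo5,Mof11c} go through partial duals or through an explicit analysis of how boundary words factor at a vertex, and you would need one of those ingredients rather than the ad hoc two-vertex contradiction you propose.
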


We obtain the following corollaries of Theorem~\ref{chainmm} and Theorem~\ref{splitmm} for ribbon graphs.

\begin{corollary}\label{cor:chainorientribb}
Let $G$ be a $2$-connected orientable ribbon graph.
If $e\in E(G)$, then $G\ba e$ or $G/e$ is $2$-connected.
\end{corollary}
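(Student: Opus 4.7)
The plan is to translate the statement through the delta-matroid correspondence and apply Corollary~\ref{chain}. The assumptions on $G$ match exactly with the hypotheses needed on $D(G)$: since $G$ is $2$-connected and orientable, parts~(i) and (ii) of Proposition~\ref{coniscon} immediately tell us that $D(G)$ is a connected even delta-matroid.

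Next, I would apply Corollary~\ref{chain} to $D(G)$ at the element $e\in E(D(G))=E(G)$, which yields that at least one of $D(G)\ba e$ or $D(G)/e$ is connected. Then I would invoke Proposition~\ref{coniscon}(iii) to rewrite these as $D(G\ba e)$ and $D(G/e)$ respectively, so that one of $D(G\ba e)$, $D(G/e)$ is a connected delta-matroid.

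Finally, applying the ``if'' direction of Proposition~\ref{coniscon}(i) to whichever of $G\ba e$ or $G/e$ yields a connected associated delta-matroid gives the desired conclusion that at least one of $G\ba e$ or $G/e$ is $2$-connected. There is no real obstacle here: the statement is a direct corollary, and all the translation work between ribbon graphs and even delta-matroids has already been done in Proposition~\ref{coniscon}. One minor thing to note is that we do not need to worry about orientability being preserved under deletion and contraction, since the conclusion asks only for $2$-connectedness; the preservation of evenness under minors is, in any case, already implicit in the chain theorem for delta-matroids used in the middle step.
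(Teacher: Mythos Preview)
Your proposal is correct and matches the paper's own approach: the paper simply states that this corollary follows from Theorem~\ref{chainmm} (via Corollary~\ref{chain}) together with the translation results in Proposition~\ref{coniscon}, which is exactly the argument you outline.
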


\begin{corollary}\label{cor:orientsplitter}
Let $G$ be a $2$-connected orientable ribbon graph with a $2$-connected minor $H$.
If $e\in E(G)-E(H)$, then $G\ba e$ or $G/e$ is $2$-connected with $H$ as a minor.
\end{corollary}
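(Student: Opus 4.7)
My plan is to transfer the splitter theorem for connected even delta-matroids (Corollary~\ref{splitter}) to ribbon graphs via the dictionary in Proposition~\ref{coniscon}.

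To set up, apply Proposition~\ref{coniscon}(i) and (ii) to see that $D(G)$ is a connected, even delta-matroid (since $G$ is a $2$-connected orientable ribbon graph) and, for the same reason, so is $D(H)$. Because $H$ is a ribbon-graph minor of $G$, iterating Proposition~\ref{coniscon}(iii) shows that $D(H)$ is a delta-matroid minor of $D(G)$. Corollary~\ref{splitter} applied to the triple $(D(G), D(H), e)$ then yields a connected delta-matroid, either $D(G)\ba e$ or $D(G)/e$, containing $D(H)$ as a minor. Using Proposition~\ref{coniscon}(iii) to rewrite these as $D(G\ba e)$ and $D(G/e)$, and then Proposition~\ref{coniscon}(i) in reverse, one of $G\ba e$ and $G/e$ is $2$-connected.

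The remaining task is to check that the side singled out as $2$-connected actually has $H$ as a ribbon-graph minor (not merely a delta-matroid equivalent). Writing $H=G\ba A/B$ with disjoint $A,B$ partitioning $E(G)-E(H)$, one has $e\in A$ or $e\in B$, so $H$ is automatically a ribbon-graph minor of $G\ba e$ or of $G/e$, respectively. I expect the principal obstacle to be matching this visible minor side with the connected side produced by the splitter theorem, since different ribbon graphs can share a delta-matroid. I would address this by lifting to the tight $2$-matroid $Q_2(D(G))$ and invoking Theorem~\ref{splitmm} with the subtransversal that records $(A,B)$; the skew-pair conclusion of that theorem identifies the minor on the correct side of the pair $\{e,e'\}$, which then translates back through Proposition~\ref{coniscon}(iii) to give $H$ itself as a ribbon-graph minor of the $2$-connected choice.
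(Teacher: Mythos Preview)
Your overall route---translate to delta-matroids via Proposition~\ref{coniscon}, invoke the even--delta-matroid splitter (Corollary~\ref{splitter}), and translate back---is exactly the paper's: the paper simply records Corollary~\ref{cor:orientsplitter} as an immediate consequence of Theorem~\ref{splitmm} through the dictionary in Proposition~\ref{coniscon}, with no further argument.

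You go further than the paper by flagging a genuine subtlety: an equality $D(G')=D(H)$ of delta-matroids does not force $G'=H$ as ribbon graphs, so ``$D(H)$ is a delta-matroid minor of $D(G/e)$'' is, on its face, weaker than ``$H$ is a ribbon-graph minor of $G/e$''. The paper does not comment on this point.

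Your proposed resolution, however, does not close the gap. Writing $H=G\ba A/B$ and taking the subtransversal $S$ of $Q_2(D(G))$ that records $(A,B)$, the element of $S$ in $e$'s skew class is $e'$ when $e\in A$ (respectively $e$ when $e\in B$). Theorem~\ref{splitmm} then gives two cases. In case~(i), $Q|e'$ (respectively $Q|e$) is connected, and here your argument is complete: the $2$-connected side is precisely the side on which $H$ visibly sits as a ribbon-graph minor via $S\setminus\{e'\}$ (respectively $S\setminus\{e\}$). In case~(ii), though, the theorem only says the \emph{other} element $x$ of the skew pair has $Q|x$ connected with $Q|S$ as a \emph{multimatroid} minor; inspecting the proof of Theorem~\ref{splitmm}, this minor is realized via a subtransversal $Z\cup S''$ that may differ from $S\setminus\{e'\}$ on the skew classes inside the separator $X$. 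Pushing this back through Proposition~\ref{coniscon}(iii) yields a ribbon-graph minor of the $2$-connected side whose delta-matroid equals $D(H)$, not $H$ itself. So your last sentence claims more than Theorem~\ref{splitmm} actually provides, and case~(ii) still needs a separate ribbon-graph argument.
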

Unfortunately it is not possible to extend Corollary~\ref{cor:orientsplitter} to the class of all ribbon graphs, as the following example illustrates. Let $G$ be the ribbon graph formed by taking a planar embedding of the graph with two vertices and three parallel edges joining the two vertices, and giving a half-twist to one of the edges. Let $e$ denote the edge with a half-twist and let $a$, $b$ denote the other two edges. Then $G$ is $2$-connected with the $2$-connected minor $G/b\ba e$ comprising one vertex with an orientable loop attached. However $G/b$ is not $2$-connected. On the other hand $G\ba b$ is $2$-connected but does not contain $G/b\ba e$ as a minor.

However it is possible to exploit results of Brijder and Hoogeboom to establish a different splitter theorem for all ribbon graphs. We need to define three operations on delta-matroids and ribbon graphs. Bouchet introduced the twisting operation in~\cite{ab1}. Let $D=(E,\mathcal F)$ be a delta-matroid and let $A\subseteq E$. Then $D*A$ is the delta-matroid with ground set $E$ and collection of feasible sets $\{F\btu A:F\in \mathcal F\}$. It is easy to show that $D*A$ is indeed a delta-matroid. The analogous operation in ribbon graphs is the more complex operation of partial duality introduced by Chmutov in~\cite{Chmutov}.
For the purposes of this paper it is sufficient to define this operation by illustrating in Table~\ref{tablecontractrg} how to form $G*e$ for each type of edge $e$.
If $e_1$ and $e_2$ are edges of a ribbon graph $G$ then $(G*e_1)*e_2 = (G*e_2)*e_1 $, and so for $A=\{a_1, \ldots , a_n\}\subseteq E(G)$ we can define the {\em partial dual} of $G$ by $A$,  as $D*A= D*a_1*\cdots  *a_n$.
For more information see~\cite{Chmutov,EMMbook}. It is shown in~\cite{delta-ribbon} that these operations are compatible in the sense that if $G$ is a ribbon graph, then $D(G*A)=D(G)*A$.

Following  Brijder and Hoogeboom~\cite{BH11}, let  $D=(E,\mathcal{F})$ be a set system and  $e\in E$.
Then $D+e$ is defined to be the set system $(E,\mathcal{F}')$ where
$ \mathcal{F}'= \mathcal{F} \triangle \{ F\cup e : F\in \mathcal{F} \text{ and } e\notin F    \} $.
If $e_1, e_2 \in E$ then $(D+e_1)+e_2 = (D+e_2)+e_1 $, and so  for $A=\{a_1, \ldots , a_n\}\subseteq E$ we can define the {\em loop complementation} of $D$ by $A$,  as $D+A= D+a_1+\cdots  + a_n$.
Note that the set of delta-matroids is not closed under loop complementation.
A delta-matroid is said to be {\em vf-safe} if the application of any sequence of twists and loop complementations always results in a delta-matroid. The class of
vf-safe delta-matroids is known to be minor closed and strictly contains the class of ribbon-graphic delta-matroids (see~\cite{BH13}).
For a ribbon graph $G$ and set of edges $A$, let $G+A$ denote the ribbon graph formed by applying a half-twist to every edge in $A$. It is shown in~\cite{delta-twist} that loop-complementation and applying a half-twist are compatible operations, in the sense that $D(G)+A = D(G+A)$. For a delta-matroid $D$ (respectively ribbon graph $G$), we define $D\bar * A= D+A*A+A$ (respectively $G\bar * A= G+A*A+A$).

Brijder and Hoogeboom have recently shown in~\cite{BH14} that there is a natural correspondence between vf-safe delta-matroids and tight 3-matroids as follows. Let $E$ be a finite set and let $E_0=E$, $E_1=\{e':e\in E\}$ and $E_2=\{e'':e\in E\}$.
Let $U=E_0\cup E_1 \cup E_2$ and $\Omega=\{\{e,e',e''\}:e\in E\}$.
There is a natural projection $\pi$ mapping transversals of
$\Omega$ to subsets of $E$.
\begin{theorem}[Brijder and Hoogeboom]\label{thm:BH}
Using the notation from above, there is a one-to-one correspondence between vf-safe delta-matroids with ground set $E$, and tight $3$-matroids with ground set $U$ and set $\Omega$ of skew classes, given by the following map.
The vf-safe delta-matroid $D$ is mapped to the tight $3$-matroid $Q_3(D)$ in which
a transversal $B$ is a basis of $Q_3(D)$ if and only if $\pi(B \cap E_1)$ is a feasible set of $D \bar * \pi(B\cap E_2)$.
The inverse map takes a tight $3$-matroid $Q$ to a vf-safe delta-matroid $D(Q)$ in which $F$ is feasible if and only if there is a basis $B$ of $Q$ such that $B \subseteq E_0 \cup E_1$ and $\pi(B\cap E_1)=F$.
\end{theorem}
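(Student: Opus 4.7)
The plan is to establish the bijection in three stages: (a) verify that $Q_3(D)$ is indeed a tight $3$-matroid whenever $D$ is vf-safe; (b) verify that $D(Q)$ is indeed a vf-safe delta-matroid whenever $Q$ is a tight $3$-matroid; and (c) check that the two maps are mutually inverse. A preliminary observation to set up first is that transpositions within the skew classes of a $3$-matroid correspond to the three operations $*$, $+$, and $\bar *$ on the associated delta-matroid. More precisely, the symmetric group $S_3$ acts on each skew class $\{e,e',e''\}$ simultaneously, and under the correspondence the three non-trivial transpositions match the three involutions $D\mapsto D*E$, $D\mapsto D+E$, and $D\mapsto D\bar * E$ (restricted to the appropriate ground-set subset). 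Making this dictionary precise up front will let us reduce any computation involving a general transversal $B$ to the case $B\cap E_2=\emptyset$, where the definition of $Q_3(D)$ directly reads $\pi(B\cap E_1)\in\mathcal F$.

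For stage (a), I would first record a basis exchange characterisation for $3$-matroids analogous to Theorem~\ref{th:Bouchetbases}: a non-empty collection $\mathcal B$ of transversals of $\Omega$ is the set of bases of a $3$-matroid iff, whenever $B_1,B_2\in\mathcal B$ and $p\subseteq B_1\btu B_2$ is a skew pair, there is a skew pair $q$ with $B_1\btu(p\cup q)\in\mathcal B$. To verify this for the proposed $\mathcal B$, apply the $S_3$ action to the skew classes meeting $B_1\cap E_2$ so as to move $B_1$ into $E_0\cup E_1$; the exchange condition then reduces to the symmetric exchange axiom for $D\bar *\pi(B_2\cap E_2)$, which is a delta-matroid precisely because $D$ is vf-safe. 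Tightness is checked by fixing a near-transversal $A$ avoiding a skew class $\omega=\{e,e',e''\}$ and evaluating $r(A\cup x)-r(A)$ for each $x\in\omega$; interpreting these three quantities as feasibility indicators for a fixed candidate set in $D$, $D*e$, and $D\bar * e$ (after twisting by $\pi(A\cap E_2)$), one shows by a direct counting argument on the three operations at $e$ that exactly one of the three differences equals $0$ and the other two equal $1$, so the sum is $|\omega|-1=2$.

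For stage (b), I take $\mathcal F:=\{\pi(B\cap E_1):B\text{ a basis of }Q,\ B\subseteq E_0\cup E_1\}$ and verify the symmetric exchange axiom by applying the $3$-matroid exchange from stage (a) to two such bases $B_1,B_2$ with the skew pair $\{e,e'\}\subseteq B_1\btu B_2$: the exchanged pair $q$ must be of the form $\{f,f'\}$, by the constraint that the new basis remains in $E_0\cup E_1$, and this yields precisely the required delta-matroid exchange. To see that $D(Q)$ is vf-safe, observe that any composition of twists and loop-complementations of $D(Q)$ corresponds (by the dictionary from the preliminary observation) to applying some element of $S_3$ independently to each skew class of $Q$, producing a new tight $3$-matroid $Q'$; hence the result is $D(Q')$, which is again a delta-matroid. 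Stage (c) is a bookkeeping check: for $D(Q_3(D))$, restricting to bases $B\subseteq E_0\cup E_1$ sets $B\cap E_2=\emptyset$ and so $D\bar *\emptyset=D$, recovering $\mathcal F$; for $Q_3(D(Q))$, a transversal $B$ is a basis on the right iff, after the $S_3$-move that sends $B$ into $E_0\cup E_1$, the resulting set is a basis of the conjugate of $Q$, which is just $B$ being a basis of $Q$.

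The main obstacle I expect is proving tightness in stage (a): tracking feasibility simultaneously across the three delta-matroids $D$, $D*e$, and $D\bar * e$ at a single element, and showing that exactly one of the three corresponding rank increments vanishes, is the combinatorial heart of the theorem and the step in which the specific definition of $\bar *$ (rather than some other combination of $*$ and $+$) is essential. I would spend most of the effort making this three-way feasibility analysis clean, very likely by pre-proving a small lemma that classifies, for any set system $D$ and element $e$, which of the pairs $(e\in F,\ e\notin F)$ are feasible in $D$, $D*e$, $D+e$, and $D\bar * e$.
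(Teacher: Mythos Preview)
The paper does not give its own proof of this theorem: it is stated as a result of Brijder and Hoogeboom and cited from~\cite{BH14}, with no argument supplied. Consequently there is no proof in the paper to compare your proposal against.

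That said, your outline is broadly the right shape for how such a correspondence is established. The $S_3$-action on skew classes is exactly the mechanism Brijder and Hoogeboom use to relate the three involutions $*$, $+$, $\bar *$ on delta-matroids to permutations of the three elements in each skew class, and reducing a general transversal to one contained in $E_0\cup E_1$ by this action is the standard trick. One point to be careful about: your basis-exchange characterisation for $3$-matroids in stage~(a) is not quite sufficient as stated. For $2$-matroids the exchange axiom of Theorem~\ref{th:Bouchetbases} characterises bases completely, but for $q\geq 3$ the analogous exchange condition alone does not force tightness, nor does it obviously prevent the existence of a skew pair $p\subseteq B_1\btu B_2$ for which the only valid $q$ lies in a third element of some skew class rather than in the pair swapped between $B_1$ and $B_2$. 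You would need to formulate and prove a genuine $3$-matroid basis axiom (or work with the rank function directly) before the reduction to the delta-matroid symmetric exchange axiom goes through. Your identification of the tightness verification as the combinatorial heart is accurate; the ``exactly one of three increments vanishes'' claim is where the specific definition $\bar * = +*+$ is used, and the small lemma you propose is the natural way to organise it.
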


Moreover, as shown in~\cite{BH14}, minor operations are preserved by this correspondence in the following sense. Let $e \in E$. Then
\begin{equation}
Q_3(D\ba e) = Q_3(D)|e, \quad
Q_3(D/e) = Q_3(D)|e', \quad
Q_3(D+e/e) = Q_3(D)|e''. \label{eq:minor}
\end{equation}
The third equation above suggests a third minor operation in vf-safe delta-matroids and, as a consequence, ribbon-graphs.
We call the operation of taking a loop complementation with respect to $e$ followed immediately by contracting $e$ to be the \emph{twist-contraction} of $e$.
It is not difficult to show that in both ribbon graphs and delta-matroids, the order in which a set of deletions, contractions and twist-contractions is applied does not affect the result.
If $D$ is a vf-safe delta-matroid, then we say that $D'$ is a $3$-minor of $D$ if $D'$ may be obtained from $D$ by a sequence of deletions, contractions and twist-contractions. Similarly we say that a ribbon graph $H$ is a $3$-minor of a ribbon graph $G$ if $H$ may be obtained from $G$ by a sequence of deletions of edges, deletions of vertices, contractions of edges and twist-contractions of edges.

In order to translate results from the setting of tight $3$-matroids to vf-safe delta-matroids, we need one final result.
\begin{proposition}\label{conn3}
Let $D=(E,\mathcal F)$ be a vf-safe delta-matroid. Then $D$ is connected if and only if $Q_3(D)$ is connected.
\end{proposition}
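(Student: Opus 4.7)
My plan is to reduce everything to the fact that $Q_3$ turns direct sums into direct sums: if $D = D_1\oplus D_2$ on disjoint ground sets $E'$ and $E''$, then $Q_3(D) = Q_3(D_1)\oplus Q_3(D_2)$. Given this, both implications of the proposition are immediate. For the forward direction, any non-trivial decomposition of $D$ yields a non-trivial decomposition of $Q_3(D)$. For the converse, any proper separator $S$ of $Q_3(D)$ is, by the definition of a separator in a multimatroid, a union of skew classes, so it projects under $\pi$ to a non-empty proper subset $E'$ of $E$; since $Q_3$ is a bijection onto tight $3$-matroids (Theorem~\ref{thm:BH}), the two direct summands of $Q_3(D)$ are of the form $Q_3(D_1')$ and $Q_3(D_2')$ for vf-safe delta-matroids on $E'$ and $E-E'$, and comparing feasible sets through Theorem~\ref{thm:BH} gives $D = D_1'\oplus D_2'$.

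The substantive step is therefore to show $Q_3$ respects direct sum. First I would verify directly from the element-wise definitions that twisting and loop complementation each respect direct sum: if $A_1 = A\cap E(D_1)$ and $A_2 = A\cap E(D_2)$, then
\[(D_1\oplus D_2)*A = (D_1 *A_1)\oplus(D_2*A_2) \quad\text{and}\quad (D_1\oplus D_2)+A = (D_1+A_1)\oplus(D_2+A_2).\]
Each operation $*e$ or $+e$ only toggles membership of feasible sets according to their intersection with $\{e\}$, so it acts inside the summand containing $e$ and leaves the other summand untouched. Combining these observations yields $(D_1\oplus D_2)\bar * A = (D_1\bar * A_1)\oplus(D_2\bar * A_2)$.

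Now let $D = D_1\oplus D_2$ and let $T$ be a transversal of the skew classes of $Q_3(D)$. Write $T = T_1 \cup T_2$, where $T_i$ consists of the elements of $T$ lying in skew classes indexed by $E(D_i)$. By Theorem~\ref{thm:BH}, $T$ is a basis of $Q_3(D)$ if and only if $\pi(T\cap E_1)$ is a feasible set of $D\bar *\pi(T\cap E_2)$. By the previous paragraph this last delta-matroid splits as $(D_1\bar * X_1)\oplus(D_2\bar * X_2)$ with $X_i = \pi(T_i \cap E_2)$, so the feasibility condition factors across the two summands. Applying Theorem~\ref{thm:BH} in the other direction to each summand, this is in turn equivalent to $T_1$ being a basis of $Q_3(D_1)$ and $T_2$ being a basis of $Q_3(D_2)$ --- which are precisely the bases of $Q_3(D_1)\oplus Q_3(D_2)$.

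I expect the main obstacle to be the notational bookkeeping in this last step: one must simultaneously track the three disjoint copies $E_0, E_1, E_2$ of $E$, the projection $\pi$, and the partition of $E$ induced by the direct sum decomposition, and then factor the Brijder--Hoogeboom criterion cleanly across it. The underlying content is routine, but it is easy to confuse which copy of the ground set a given element lives in, and one should check carefully that the condition $B \subseteq E_0 \cup E_1$ used in the inverse half of Theorem~\ref{thm:BH} also respects the direct sum.
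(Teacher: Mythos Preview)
Your proposal is correct and follows essentially the same route as the paper: both arguments show that $*$ and $+$ (and hence $\bar*$) respect direct sums, and then factor the basis criterion of Theorem~\ref{thm:BH} across the decomposition --- your penultimate paragraph is the paper's main computation almost verbatim. The only minor difference is in the easy direction ($Q_3(D)$ disconnected $\Rightarrow$ $D$ disconnected): the paper reads this off directly from the form of the inverse map in Theorem~\ref{thm:BH}, whereas you go via the bijection and implicitly need the direct summands of a tight $3$-matroid to be tight (true, since restriction to a separator is a minor and minors of tight multimatroids are tight), so the paper's route is slightly more economical there.
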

\begin{proof}
It is clear from the form of the map taking a tight $3$-matroid to a vf-safe delta-matroid that if $Q_3(D)$ is disconnected, then so is $D$. We now prove the converse. We claim
that if $X$ is separator of $D$, then it is also a separator of both $D+A$, $D*A$ and $D\bar *A$ for any subset $A$ of $E(D)$. It is simple to verify this claim in the case that $A$ comprises a single element and then the claim follows using an easy induction.

We keep the notation used above in the construction of $Q_3(D)$, in particular $E_0$, $E_1$, $E_2$ and $\pi$.
Suppose that $X$ is a proper separator of $D$.
Thus $D=D_1 \oplus D_2$, where $E(D_1)=X$ and $E(D_2)=E-X$.
Let $U$ denote the ground set of $Q_3(D)$ and $\Omega$ the partition of $U$ into skew classes. Recall that each skew class corresponds to an element of $E$.
Let $Y$ denote the union of all the skew classes of $Q_3(D)$ corresponding to elements of $X$.
The condition that $B$ is a basis of $Q_3(D)$ is equivalent to saying that $\pi(B\cap E_1)$ is a feasible set of $D\bar * \pi(B\cap E_2)$.
This in turn is equivalent to saying that $\pi(B\cap E_1) \cap X$ is a feasible set of $D\bar * \pi(B\cap E_2)|X$ and
$\pi(B\cap E_1) \cap (E-X)$ is a feasible set of $D\bar * \pi(B\cap E_2)|(E-X)$. Now this holds if and only if $\pi(B\cap Y \cap E_1)$ is a feasible set of $D_1\bar * \pi(B\cap Y \cap E_2)$
and $\pi(B\cap (U-Y) \cap E_1)$ is a feasible set of $D_2\bar * \pi(B\cap (U-Y) \cap E_2)$. Finally this is equivalent to saying that $B\cap Y$ is a basis of $Q_3(D_1)$ and $B\cap (U-Y)$ is a basis of $Q_3(D_2)$.
Thus $Y$ is a proper separator of $Q_3(D)$. \qed
\end{proof}

Combining Theorem~\ref{chainmm} and Proposition~\ref{conn3} with Theorem~\ref{thm:BH} and~\eqref{eq:minor}, we obtain the following.
\begin{corollary}
Let $D$ be a connected vf-safe delta-matroid. If $e\in E(D)$, then at least two of $D\ba e$, $D/e$ and $D+e/e$ are connected.
\end{corollary}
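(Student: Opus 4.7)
The proof plan is essentially to transport the chain theorem for connected tight multimatroids (Theorem~\ref{chainmm}) across the Brijder--Hoogeboom correspondence (Theorem~\ref{thm:BH}), using that connectivity and the three minor operations on vf-safe delta-matroids correspond exactly to connectivity and the three minors with respect to the elements of a skew class in the associated tight $3$-matroid.

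First, I would pass from $D$ to $Q_3(D)$. Since $D$ is vf-safe, Theorem~\ref{thm:BH} tells us that $Q_3(D)$ is a tight $3$-matroid, and since $D$ is connected, Proposition~\ref{conn3} tells us that $Q_3(D)$ is connected. The skew class of $Q_3(D)$ corresponding to $e\in E(D)$ is $\{e,e',e''\}$, which has size $3$.

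Next, I would apply Theorem~\ref{chainmm} to this skew class: at least $3-1=2$ of the minors $Q_3(D)|e$, $Q_3(D)|e'$, $Q_3(D)|e''$ are connected. By the identities in~\eqref{eq:minor}, these three minors are precisely $Q_3(D\ba e)$, $Q_3(D/e)$, and $Q_3(D+e/e)$. Since every minor of a vf-safe delta-matroid (in the extended sense, including twist-contraction) is again vf-safe, Proposition~\ref{conn3} applies to each of the three, and connectedness of the $3$-matroid is equivalent to connectedness of the delta-matroid. Hence at least two of $D\ba e$, $D/e$, and $D+e/e$ are connected, which is the desired conclusion.

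There is no substantial obstacle here: the whole point of setting up Theorem~\ref{thm:BH}, the minor correspondence~\eqref{eq:minor}, and Proposition~\ref{conn3} is to reduce statements of this form directly to the multimatroid setting. The only thing one should check carefully is that the three operations $\ba e$, $/e$, $+e/e$ indeed correspond under $Q_3$ to the three elements of the skew class $\{e,e',e''\}$ in the order given by~\eqref{eq:minor}, so that the chain theorem applied to that skew class yields exactly the claimed statement.
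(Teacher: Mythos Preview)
Your proposal is correct and follows exactly the route the paper takes: the corollary is stated immediately after the sentence ``Combining Theorem~\ref{chainmm} and Proposition~\ref{conn3} with Theorem~\ref{thm:BH} and~\eqref{eq:minor}, we obtain the following,'' and your argument spells out precisely this combination. There is nothing to add.
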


\begin{corollary}
Let $G$ be a $2$-connected ribbon graph. If $e\in E(G)$, then at least two of $G\ba e$, $G/e$ and $G+e/e$ are $2$-connected.
\end{corollary}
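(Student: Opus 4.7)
The plan is to deduce this corollary directly from the previous corollary (the vf-safe delta-matroid version) via the dictionary between ribbon graphs and their delta-matroids established in Proposition~\ref{coniscon}, together with the compatibility between half-twisting of edges and loop complementation of delta-matroids recalled in the paragraph just before Proposition~\ref{conn3}.

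First, I would observe that $D(G)$ is ribbon-graphic and therefore, as noted in the discussion of vf-safe delta-matroids, lies in the vf-safe class. By Proposition~\ref{coniscon}(i), the hypothesis that $G$ is $2$-connected is equivalent to $D(G)$ being connected, so the previous corollary applies and yields that at least two of the delta-matroids
\[ D(G)\ba e, \qquad D(G)/e, \qquad D(G)+e/e \]
are connected.

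Next, I would translate these three delta-matroids into ribbon-graph terms. Proposition~\ref{coniscon}(iii) gives $D(G)\ba e = D(G\ba e)$ and $D(G)/e = D(G/e)$. For the twist-contraction, the identity $D(H+A) = D(H)+A$ applied with $A=\{e\}$ to $H=G$ gives $D(G)+e = D(G+e)$; then Proposition~\ref{coniscon}(iii) applied to the ribbon graph $G+e$ gives $D(G+e)/e = D((G+e)/e) = D(G+e/e)$. Combining, $D(G)+e/e = D(G+e/e)$.

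Finally, applying Proposition~\ref{coniscon}(i) in the reverse direction to each of the three ribbon graphs $G\ba e$, $G/e$ and $G+e/e$, connectedness of the associated delta-matroid is equivalent to $2$-connectedness of the ribbon graph. Hence at least two of $G\ba e$, $G/e$ and $G+e/e$ are $2$-connected, as required. There is no real obstacle in this proof; the content is entirely in correctly matching up the three minor operations on ribbon graphs with the three minor operations on vf-safe delta-matroids and then invoking the already-proved delta-matroid corollary.
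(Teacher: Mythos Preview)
Your proof is correct and follows essentially the same route as the paper. The paper simply asserts that both the vf-safe delta-matroid corollary and this ribbon-graph corollary follow by ``combining Theorem~\ref{chainmm} and Proposition~\ref{conn3} with Theorem~\ref{thm:BH} and~\eqref{eq:minor}''; your argument makes explicit the additional ingredients needed for the ribbon-graph case (Proposition~\ref{coniscon} and the compatibility $D(G+e)=D(G)+e$) and factors the deduction through the delta-matroid corollary, which is exactly the intended translation.
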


It follows immediately that we can drop the orientability condition from Corollary~\ref{cor:chainorientribb}.
\begin{corollary}
Let $G$ be a $2$-connected ribbon graph. If $e\in E(G)$, then $G\ba e$ or $G/e$ is $2$-connected.
\end{corollary}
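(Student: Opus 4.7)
The plan is to derive this corollary as an immediate consequence of the preceding corollary, which guarantees that at least two of the three ribbon graphs $G\ba e$, $G/e$, and $G+e/e$ are $2$-connected. I would simply apply a pigeonhole argument: if the set of $2$-connected members of $\{G\ba e,\, G/e,\, G+e/e\}$ has cardinality at least two, then this set cannot be contained in $\{G+e/e\}$, and therefore must contain at least one of $G\ba e$ or $G/e$. That is precisely the conclusion we want.

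There is essentially no obstacle to overcome here, since all the substantive work has already been absorbed into the previous corollary, whose proof in turn rests on the Brijder--Hoogeboom correspondence of Theorem~\ref{thm:BH}, the compatibility of the three minor operations with the skew classes of $Q_3(D)$ given in~\eqref{eq:minor}, Proposition~\ref{conn3} translating connectivity across the correspondence, and Bouchet's chain theorem for tight multimatroids (Theorem~\ref{chainmm}) applied to a single skew class of size three. The upshot is the full generalization of Tutte's chain theorem (Theorem~\ref{th:tutte}) to arbitrary $2$-connected ribbon graphs, dispensing with the orientability hypothesis required by Corollary~\ref{cor:chainorientribb}.
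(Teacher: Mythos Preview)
Your proposal is correct and matches the paper's own reasoning exactly: the paper simply asserts that this corollary follows immediately from the preceding one, and your pigeonhole observation is precisely the content of that implication.
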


Finally, by combining Theorem~\ref{chainmm} and Proposition~\ref{conn3} with Theorem~\ref{thm:BH} and~\eqref{eq:minor}, we obtain the following.

\begin{corollary}
\label{cor:splitterdelta}
Let $D$ be a connected delta-matroid with a connected $3$-minor $D'$.
If $e\in E(D)-E(D')$, then $D\ba e$, $D/e$ or $D+e/e$ is connected with $D'$ as a $3$-minor.
\end{corollary}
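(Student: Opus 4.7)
The plan is to translate the problem into the setting of tight $3$-matroids via the Brijder--Hoogeboom correspondence (Theorem~\ref{thm:BH}) and then apply the main result, Theorem~\ref{splitmm}. Note that the statement implicitly assumes $D$ is vf-safe, as twist-contraction is undefined otherwise.

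First, I would form $Q=Q_3(D)$, which is a tight $3$-matroid by Theorem~\ref{thm:BH} and is connected by Proposition~\ref{conn3}. Next, I would realize $D'$ as a minor of $Q$. Enumerating $E(D)-E(D')=\{f_1,\dots,f_k\}$, the sequence of deletions, contractions, and twist-contractions producing $D'$ from $D$ assigns to each $f_i$ an element $a_i\in\{f_i,f_i',f_i''\}$ via~\eqref{eq:minor}. Setting $A=\{a_1,\dots,a_k\}$ and iterating~\eqref{eq:minor}, together with the fact that minor operations commute in both settings, yields $Q|A=Q_3(D')$. Since $D'$ is connected, Proposition~\ref{conn3} gives that $Q|A$ is connected.

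Now let $a\in A$ be the unique element of $A$ in the skew class $\{e,e',e''\}$. Applying Theorem~\ref{splitmm} to $Q$, $A$, and $a$, either (i) $Q|a$ is connected, in which case $Q|A=(Q|a)|(A-a)$ is automatically a minor of $Q|a$; or (ii) for every $x$ in the skew class of $a$ with $x\neq a$, the minor $Q|x$ is connected with $Q|A$ as a minor. Either way, we obtain some $y\in\{e,e',e''\}$ with $Q|y$ connected and having $Q|A$ as a minor. By~\eqref{eq:minor}, $Q|y$ equals $Q_3(D'')$ where $D''\in\{D\ba e,D/e,D+e/e\}$, and Proposition~\ref{conn3} gives that $D''$ is connected. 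Writing $(Q|y)|B=Q|A$ for an appropriate subtransversal $B$ of $Q|y$ and applying the one-to-one correspondence in Theorem~\ref{thm:BH} together with a further iteration of~\eqref{eq:minor}, we conclude that $D'$ is a $3$-minor of $D''$.

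The main obstacle is bookkeeping rather than substantive mathematics: one must verify that the Brijder--Hoogeboom correspondence behaves well under iterated minor operations, namely that every subtransversal minor $Q|A$ of $Q_3(D)$ equals $Q_3(D'')$ for a unique $3$-minor $D''$ of $D$. This follows by induction on $|A|$ using~\eqref{eq:minor} and the injectivity of the correspondence; once this is in hand, the argument reduces to a direct translation of Theorem~\ref{splitmm}.
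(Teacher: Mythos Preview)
Your proposal is correct and follows essentially the same approach as the paper, which simply states that the corollary is obtained by combining Theorem~\ref{splitmm} (the paper writes Theorem~\ref{chainmm}, evidently a typo) with Proposition~\ref{conn3}, Theorem~\ref{thm:BH}, and~\eqref{eq:minor}. Your write-up supplies exactly the details this one-line justification suppresses, including the observation that the correspondence is injective so that recovering $Q|A$ as $(Q|y)|B$ genuinely yields $D'$ as a $3$-minor of $D''$.
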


\begin{corollary}\label{cor:graphsplitter}
Let $G$ be a $2$-connected ribbon graph with a $2$-connected $3$-minor $H$.
If $e\in E(G)-E(H)$, then $G\ba e$, $G/e$ or $G+e/e$ is $2$-connected with $H$ as a $3$-minor.
\end{corollary}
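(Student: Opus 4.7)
The plan is to reduce the ribbon graph statement to Corollary~\ref{cor:splitterdelta} via the ribbon-graphic delta-matroid correspondence. Specifically, I would set $D=D(G)$ and $D'=D(H)$. Since $G$ is $2$-connected, Proposition~\ref{coniscon}(i) gives that $D$ is connected; similarly $D'$ is connected. Because $D$ is ribbon-graphic, it is vf-safe, as noted in the paragraph introducing vf-safe delta-matroids. Moreover, the three ribbon-graph operations $G\ba e$, $G/e$, $G+e/e$ correspond respectively to the three delta-matroid operations $D\ba e$, $D/e$, $D+e/e$: the first two by Proposition~\ref{coniscon}(iii), and the third by combining $D(G+e)=D(G)+e$ (established in the discussion of loop-complementation) with $D(G')/e = D(G)/e$ applied to $G'=G+e$. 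Hence $D'$ is a $3$-minor of $D$ in the sense defined for vf-safe delta-matroids.

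Next I would apply Corollary~\ref{cor:splitterdelta} to $D$, $D'$ and the element $e\in E(D)-E(D')$, obtaining that at least one of $D\ba e$, $D/e$, $D+e/e$ is connected and has $D'$ as a $3$-minor. Translating back using the same correspondences, the matching one of $G\ba e$, $G/e$, $G+e/e$ has $H$ as a $3$-minor (since deletions, contractions and twist-contractions commute in both settings and the operation-level correspondence is preserved), and is $2$-connected by the reverse direction of Proposition~\ref{coniscon}(i). This yields the required conclusion.

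The only substantive checks are bookkeeping: that the correspondence $G\mapsto D(G)$ intertwines twist-contraction with the delta-matroid twist-contraction, and that $3$-minors of ribbon-graphic delta-matroids arising from $3$-minors of ribbon graphs really do coincide under $D(\cdot)$. These follow by induction on the number of operations, using the compatibility statements $D(G\ba e)=D(G)\ba e$, $D(G/e)=D(G)/e$, $D(G+e)=D(G)+e$ already recorded in the paper. I do not expect a serious obstacle: once the correspondences are in place, the result is a direct translation of Corollary~\ref{cor:splitterdelta}.
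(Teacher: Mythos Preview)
Your proposal is correct and takes essentially the same approach as the paper. The paper derives Corollaries~\ref{cor:splitterdelta} and~\ref{cor:graphsplitter} together by combining the multimatroid splitter theorem with Proposition~\ref{conn3}, Theorem~\ref{thm:BH} and~\eqref{eq:minor}; you simply factor this through Corollary~\ref{cor:splitterdelta}, using Proposition~\ref{coniscon} and the compatibilities $D(G\ba e)=D(G)\ba e$, $D(G/e)=D(G)/e$, $D(G+e)=D(G)+e$ to pass between ribbon graphs and delta-matroids, which is exactly the intended translation.
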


\section*{Acknowledgements}
We would like to thank Iain Moffatt for helpful discussions and for his assistance with the figures, and the anonymous referees for a careful reading and several suggestions that improved the exposition, in particular, for recommending that we include Lemma~\ref{lem:ref} and providing the proof.

\bibliographystyle{elsarticle-num}

\end{document}